\newtheorem{theorem}{Theorem}[section]
\newtheorem{prop}[theorem]{Proposition}
\newtheorem{lemma}[theorem]{Lemma}
\newtheorem{cor}[theorem]{Corollary}
\theoremstyle{definition}
\newtheorem{definition}[theorem]{Definition}
\theoremstyle{remark}
\def\ep{\varepsilon}    
\newcommand{\R}{\mathbb{R}}
\begin{document}

\title{Smoothness of conformal heat flow of harmonic maps}

\author{Woongbae Park}
\address{803 Hylan Building, Department of Mathematics, University of Rochester, Rochester, New York, 14627}
\email{wpark14@ur.rochester.edu}

\subjclass[2020]{Primary 58E20, 53E99, 53C43; Secondary 35K58}

\keywords{harmonic maps, conformal heat flow, regularity, infinite time bubbles}

\begin{abstract}
The conformal heat flow of harmonic maps is a system of evolution equations combined with harmonic map flow with metric evolution in conformal direction.
It is known that global weak solution of the flow exists and smooth except at mostly finitely many singular points.
In this paper, we show that no finite time singularity occurs, unlike the usual harmonic map flow.
And if the initial energy is small, we can obtain the uniform convergence of the map to a point and the conformal factor of the metric under some time sequence $t_n \to \infty$.
Also, under the assumption that energy concentration is uniform in time, we show that there exists a sequence of time $t_n \to \infty$ such that $f(\cdot,t_n)$ converges to a harmonic map in $W^{1,2}$ on any compact set away from at most finitely many points.
\end{abstract}

\maketitle
\sloppy

\section{Introduction}
\label{sec1}

We consider the following conformal heat flow (CHF) for harmonic maps. It is a solution $(f,u)$ of a pair of equations
\begin{equation}\label{eq1}
\begin{cases}
f_t =& \tau_g (f)\\
u_t =& b |df|_g^2 - a
\end{cases}
\end{equation}
where $a,b$ are positive constants.
Here $f : (M,g) \times [0,T] \to (N,h)$ is a time-dependent map from a Riemann surface $(M,g)$ without boundary to a compact Riemannian manifold $(N,h)$, $u : (M,g) \times [0,T] \to \R$ is a conformal factor of the metric $g$ given by $g(x,t) = e^{2u(x,t)}g_0(x)$, $\tau_g(f)$ is the tension field of $f$ with respect to $g$ and $|df|_g^2 = g^{ij} h_{\alpha \beta} f^{\alpha}_i f^\beta_j$ is the energy density in local coordinates.
We put the initial condition $f(0) = f_0 : (M,g) \to (N,h)$ and $u(0) = 0$.
If we embed $(N,h) \hookrightarrow \R^L$ isometrically, the equation can be written with respect to $g_0$ by
\begin{equation} \label{eq2}
\begin{cases}
f_t =& e^{-2u} \tau_{g_0}(f) = e^{-2u} (\Delta f + A(f)(df,df))\\
u_t =& b e^{-2u} |df|^2 - a.
\end{cases}
\end{equation}
We assume that there is a constant $C_N$ which only depends on the embedding $(N,h) \hookrightarrow \R^L$ such that $\|A\|_{\infty}, \|DA\|_{\infty}, \|D^2 A\|_{\infty} \leq C_N$ and that $b > 2C_N^2 + C_N$.

The motivation of CHF for harmonic maps is introduced in \cite{P23}.
It is designed to obtain a variation of harmonic map flow which has a smooth global solution.
For a map $f : (M,g) \to (N,h)$ between two Riemannian manifolds, the harmonic map flow is a negative gradient flow of Dirichlet energy
\[
E(f) = \frac{1}{2} \int_{M} |df|_g^2 dvol_{g}
\]
where $dvol_{g} = \sqrt{|g|} dx$ is the volume form of $M$.
It is known that the harmonic map heat flow exists and smooth under non-positive sectional curvature condition of $N$ by Eells-Sampson \cite{EL78}, and global weak solution was obtained by Struwe \cite{S85} where at most finitely many bubble points can occur.
And such finite time singularity indeed exists, by Chang-Ding-Ye \cite{CDY92} and recently generalized by Davila-Del Pino-Wei \cite{DDW20}.
The further results, we refer to, e.g., Qing-Tian \cite{QT97}, Chang \cite{Chang89}, Topping \cite{T97}, \cite{T02}, \cite{T04b}, Freire \cite{F95}, Ling-Wang \cite{LW98}, and many others.

In terms of variations of harmonic map flow together with evolution of the metric, Topping-Rupflin \cite{RT18a} developed the so-called Teichm{\"u}ller flow which is the negative $L^2$ gradient flow of the Dirichlet energy with respect to both the map and the metric.
M{\"u}ller \cite{M12} considered Ricci-harmonic map flow, a combination of harmonic map flow and Ricci flow of the metric, and Buzano-Rupflin \cite{BR17} showed the long time existence under large coupling constant for 2 dimensional domain.

In a previous paper \cite{P23}, the author studied the CHF of harmonic maps \eqref{eq1} and established the following analogue of Struwe's theorem.
\begin{theorem}[Park \cite{P23}] \label{weak thm}
There exists a weak solution $f \in W^{1,2}_{loc}(M \times [0,\infty),N)$, $u \in W^{1,2}_{loc}(M \times [0,\infty))$ of \eqref{eq1} that is smooth in $M \times [0,\infty)$ except at most finitely many points.
\end{theorem}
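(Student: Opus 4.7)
The strategy is to mimic Struwe's construction for the standard harmonic map flow, adapted to handle the coupled system \eqref{eq1}. I would proceed in four steps.

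First, I would establish short-time existence and uniqueness of smooth solutions. Viewing \eqref{eq2} as a quasi-linear parabolic system in $(f,u)$ with leading part $e^{-2u}\Delta f$ and $\partial_t u$, a Banach fixed-point argument in a H\"older or parabolic Sobolev space gives a unique smooth solution on a maximal interval $[0,T_{\max})$, provided $u$ stays bounded below (so $e^{-2u}$ stays bounded above) and $|df|$ stays pointwise bounded. Standard target-manifold tricks (nearest-point projection or the extrinsic form with $A(f)(df,df)$) handle the constraint $f\in N$.

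Second, I would derive the basic energy identity for the coupled flow. In dimension two the Dirichlet energy $E(f,g)=\tfrac12\int_M|df|_g^2\,d\mathrm{vol}_g$ is conformally invariant, so it depends only on $f$, not on $u$; differentiating along \eqref{eq1} should give
\[
\frac{d}{dt}E(f,g) = -\int_M |f_t|_g^2\,d\mathrm{vol}_g \leq 0,
\]
yielding $E(f(t),g(t))\le E(f_0,g_0)$ and $\int_0^\infty\!\!\int_M|f_t|_g^2\,d\mathrm{vol}_g\,dt<\infty$. Integrating the $u$-equation provides matching global control: $\int_M u(\cdot,t)\,d\mathrm{vol}_{g_0}$ grows at most linearly in $t$ with rate controlled by $bE(f_0)-a|M|$, while a Moser-type iteration on $u_t = be^{-2u}|df|^2-a$ prevents pointwise blow-up of $u$ as long as the local energy of $f$ stays small.

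Third, I would prove an $\varepsilon$-regularity lemma: there exist $\varepsilon_0,R_0>0$ such that whenever
\[
\sup_{x\in M,\ t\in[t_1,t_2]}\int_{B_R(x)}|df|_{g_0}^2\,d\mathrm{vol}_{g_0}\le\varepsilon_0 \qquad (R\le R_0),
\]
the pair $(f,u)$ admits uniform $C^k$ bounds on $M\times[t_1+\delta,t_2]$ depending only on $\varepsilon_0,R,\delta$ and the initial data. The proof runs Struwe's local energy inequality for $|df|^2$, but coupled with a parallel iteration for $u$ through its own equation; the hypothesis $b>2C_N^2+C_N$ is used here to absorb the cross-term $be^{-2u}|df|^2$ in the evolution of $\tfrac12\int|df|_{g_0}^2\varphi^2$ and to close a Moser--Trudinger bootstrap on $e^{-2u}$.

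Fourth, I would extract and remove singularities. Since $E(f(t),g(t))\le E(f_0)$ for all $t$ and each singular point carries at least $\varepsilon_0$ of energy, there can be only finitely many spacetime points $(x_k,T_k)$ where the $\varepsilon$-regularity hypothesis fails. Away from this finite singular set $(f,u)$ is smooth and, at each $T_k$, weak limits $(f(\cdot,T_k),u(\cdot,T_k))$ exist in $W^{1,2}$ by the energy bounds; restarting the flow from this data and iterating extends the solution to $[0,\infty)$, smooth outside finitely many points. The main obstacle is step three: unlike the uncoupled harmonic map flow, here $u$ sits in the leading coefficient of the $f$-equation and is itself driven by the possibly large quantity $|df|^2$, so the two estimates must be closed simultaneously, and this is exactly what the hypothesis $b>2C_N^2+C_N$ is engineered for.
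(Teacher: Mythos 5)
First, note that this theorem is not proved in the present paper at all: it is quoted from the author's earlier work \cite{P23}, and the paper only imports its ingredients (the energy identity of \Cref{volume}, the local energy inequality of \Cref{local energy lemma}, and the $\ep$-regularity of \Cref{ep reg}). Your four-step Struwe-type blueprint --- short-time existence, energy identity $\frac{d}{dt}E=-\int_M e^{2u}|f_t|^2$, $\ep$-regularity, and a finite-singular-set/restart argument with energy quantization --- is exactly the skeleton those imported lemmas belong to, so at the level of strategy you have reconstructed the right proof.

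There are, however, two concrete points where your plan as written would not close. First, you have the degeneracy of $u$ backwards: the lower bound $u\ge -at$ (hence $e^{-2u}\le e^{2at}$) is free from $u_t\ge -a$ and $u(0)=0$, so ``$u$ stays bounded below'' costs nothing; the genuine obstruction, which the paper emphasizes, is that $u\to+\infty$ (equivalently $e^{-2u}\to 0$) at points of energy concentration, so the $f$-equation becomes \emph{degenerate} parabolic rather than having unbounded coefficients. Second, and relatedly, your step-three $\ep$-regularity hypothesis (small local energy alone) is not the statement that can actually be proven for this flow: \Cref{ep reg} requires in addition a smallness condition $\int_{B_{2r}\times\{T-\delta r^2\}}e^{18u}\le\ep_1$ on an initial time slice, precisely to rule out the degeneration of $e^{-2u}$; one then argues separately (as the paper does after \Cref{ep reg}) that this extra hypothesis is achievable by shrinking $r$ at a time where $u$ is still smooth. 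Without building that control on $e^{pu}$ into the bootstrap --- via \Cref{e pu}-type estimates, which is where the structural condition $b>2C_N^2+C_N$ actually does its work in the $\int e^{2u}|f_t|^{p+2}$ evolution --- the claimed uniform $C^k$ bounds from small energy alone do not follow, and the singularity-removal and restart step in your stage four has nothing to stand on.
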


This paper improves \Cref{weak thm} by studying the possible finite-time singularities, and showing that, in fact, no such singularities exist.

\begin{theorem} \label{main1}
Any solution $(f,u)$ of \eqref{eq1} is smooth on $M \times [0,\infty)$.
\end{theorem}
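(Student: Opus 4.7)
I would argue by contradiction. Suppose $(f,u)$ has a finite-time singularity at some $(x_0,T)$ with $T<\infty$. By \Cref{weak thm} applied to $[0,T]$, the singular set at time $T$ is finite, and the small-energy $\varepsilon$-regularity result underlying that theorem forces $x_0$ to be an energy concentration point,
\[
\limsup_{t\to T^-}\int_{B_r(x_0)}|df|^2\,dvol_{g_0}\geq \varepsilon_0\qquad \text{for every } r>0.
\]
Before attacking the concentration I would record the one-sided bound $u_t\geq -a$, which gives $u(x,t)\geq -aT$ and hence $e^{-2u}\leq e^{2aT}$ uniformly on $M\times[0,T]$. In particular the multiplier in $f_t = e^{-2u}\tau_{g_0}(f)$ is bounded from above, so on any subdomain where $u$ remains bounded the $f$-equation is uniformly parabolic and standard Struwe-type regularity for harmonic map flow applies off the shelf.

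The key mechanism I would exploit is the self-regularization of the coupling: the $u$-equation has formal equilibrium $e^{2u}\sim b|df|^2/a$, so $u$ is driven upward exactly where $|df|^2$ is large, which in turn slows down $f_t$ through the factor $e^{-2u}$. To make this quantitative, I would derive a pointwise parabolic differential inequality for the intrinsic energy density $e := |df|_g^2 = e^{-2u}|df|^2$. Differentiating in time and substituting the two equations of \eqref{eq2} produces the contribution $-2u_t\,e^{-2u}|df|^2 = -2b\,e^2+2a\,e$, which is a strong quartic self-damping; the structural hypothesis $b>2C_N^2+C_N$ is precisely what lets this damping dominate the curvature cross-terms coming from $A(f)(df,df)$ and from differentiating the $e^{-2u}$ multiplier. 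A Moser iteration on the resulting inequality for $e$, carried out with respect to the fixed background $g_0$ (to avoid the degeneration of $g$ as $u$ grows), should then yield a uniform pointwise bound on $|df|_g^2$ on all of $M\times[0,T]$, independent of any concentration happening at $(x_0,T)$.

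With such a bound in hand, the assumed $g_0$-energy concentration becomes self-defeating: since $|df|^2=e^{2u}|df|_g^2$ and $|df|_g^2$ is bounded, any blow-up of $|df|^2$ near $(x_0,T)$ must come from $u\to+\infty$ on a set of positive measure accumulating at $x_0$. But on that set $e^{-2u}\to 0$, so $f_t\to 0$ in $L^1_{\mathrm{loc}}$ there, forcing $f(\cdot,t)$ to converge strongly on a neighborhood of $x_0$ as $t\to T^-$; this continuous convergence is incompatible with the $\varepsilon_0$-jump of energy that defines a singular point. The main obstacle is the Bochner-type inequality for $e=|df|_g^2$ with sharp constants: one must absorb every cross term produced by the $e^{-2u}$ multipliers and by the evolving metric $g$ into the quartic damping, which is where the hypothesis $b>2C_N^2+C_N$ is expected to be used in a tight way. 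A secondary but genuine difficulty is that $u$ solves only a pointwise-in-space ODE, so its spatial regularity cannot be obtained from its own equation and has to be imported through the regularity of $|df|^2$ at each step of the iteration.
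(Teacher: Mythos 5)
Your overall strategy is genuinely different from the paper's, and unfortunately it contains gaps that I do not see how to close. The central one is the claimed pointwise parabolic inequality for $e=e^{-2u}|df|^2$. The contribution $-2u_t e^{-2u}|df|^2=-2be^2+2ae$ is computed correctly, but the other half of $\partial_t e$, namely $e^{-2u}\partial_t|df|^2=2e^{-2u}\langle df,\nabla(e^{-2u}(\Delta f+A(df,df)))\rangle$, produces terms containing $\nabla u$ (and, after the Bochner manipulation needed to extract a good Laplacian, $\Delta u$ through the curvature of $g=e^{2u}g_0$). Since $u$ solves only a pointwise ODE in time, $\nabla u=b\int_0^t\nabla\bigl(e^{-2u}|df|^2\bigr)\,ds$ involves second derivatives of $f$ over the entire time history; bounding it requires exactly the regularity you are trying to produce, so the Moser iteration cannot be closed as described. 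You flag this as a "secondary" difficulty, but it is the principal obstruction, and it is precisely why the paper works at the integral level throughout. The final step is also not sound as stated: a bound on $|df|_g^2$ controls $e^{-2u}|A(df,df)|$ but not $e^{-2u}\Delta f$, so $f_t\to 0$ does not follow from $e^{-2u}\to 0$; and even continuous convergence of $f(\cdot,t)$ as $t\nearrow T$ would not contradict energy concentration --- in the classical harmonic map flow $f(\cdot,t)$ converges weakly in $W^{1,2}$ to $f(\cdot,T)$ while bubbling occurs, so ruling out the $\varepsilon_1$-jump requires strong $W^{1,2}$ convergence, which you have not established.

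For contrast, the paper's proof avoids all pointwise estimates. The hypothesis $b>2C_N^2+C_N$ is used at the integral level, in the evolution inequality \eqref{f_t p+2 der} for $\int_M e^{2u}|f_t|^{p+2}$, to obtain the spacetime bound $\iint_{M\times[t_1,t_2]}|df|^2|f_t|^2\leq C$ (\Cref{int df^2 f_t^2}). Feeding this into $\frac{d}{dt}\Theta_r(t)\leq \frac{C}{r}\int_{B_r}|df|\,|f_t|$ shows that the localized energy $\Theta_r$ is $\tfrac12$-H\"older continuous in time with a constant independent of $r$, as in \eqref{Theta eq}. Shrinking $r$ then isolates the quantized energy drop $K\geq\ep_1$ of \eqref{blow up} and contradicts this uniform continuity via \eqref{K est}. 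If you want to salvage your approach, you would need an entirely separate mechanism to control $\nabla u$ and $\Delta u$; otherwise I would recommend redirecting the quartic-damping observation into an integral identity of the type the paper actually uses.
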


To show this, we assume there is a finite time singularity and get a contradiction.
By \cite{P23}, the only singularity type is due to the energy concentration, unlike the Teichm{\"u}ller flow where metric degeneration may occur.
For a smooth solution $(f,u)$ of \eqref{eq1} on $M \times [0,T)$, we say that the flow blows up at $(x_0,T)$ if
\begin{equation} \label{blow up}
\lim_{r \to 0} \lim_{t \nearrow T} \frac{1}{2} \int_{B_r(x_0)} |df|^2(t) \geq \ep_1
\end{equation}
for some $\ep_1>0$.
These singular points are called bubble points.

One can try to build bubbles by usual method which requires sequence of time $t_n \nearrow T$ and $L^2$ control of tension field $\|\tau(f)(t_n)\|_{L^2} \to 0$, see Qing \cite{Q95}.
However, for all $t_1 < t_2 < T$, we only have weaker control
\begin{equation} \label{tension control}
\iint_{M \times [t_1,t_2]} e^{-2u} |\tau(f)|^2 = E(t_1) - E(t_2)
\end{equation}
and $e^{-2u}(x,t) \to 0$ if $(x,t)$ approaches singularity.
At this point, we even do not have $\|\tau(f)(t)\|_{L^2} \leq C$.
Thanks to the estimate in \cite{P23}, however, we can overcome this difficulty and obtain that no finite time bubbling occurs.

For infinite time bubbling, the same issue exists.
In the case of usual harmonic map flow, one can have $\|\tau(f)(t_n)\|_{L^2} \to 0$ for some time sequence $t_n \to \infty$ and using it to get bubbles at infinite time and show the energy identity.
Again, this is not the case for CHF.
For more general control of the tension field, Qing-Tian \cite{QT97} and Ding-Tian \cite{DT95} showed energy identity and no-neck property for $\|\tau(f_i)\|_{L^2} < C$. Wang \cite{Wang17} considered the case of $\|\tau(f_i)\|_{M^{1,\delta}} < C$ where $M^{1,\delta}$ is the Morrey space for some $0 \leq \delta < 2$, and Wang-Wei-Zhang \cite{WWZ17} showed the same results for $\|\tau(f_i)\|_{L^p} < C$ for some $p>1$.
Note that Parker \cite{P96} showed that $\|\tau(f_i)\|_{L^1} < C$ is not enough for a no-neck property.

On the other hand, for CHF, we cannot choose a sequence of time $t_n \to \infty$ such that $f(t_n)$ be a sequence of approximate harmonic maps with $L^2$ bounded tension fields due to the weaker control of tension field \eqref{tension control}.
But if the initial energy is small enough, one can recover convergence over a sequence of time $t_n \to \infty$ similar to usual harmonic map flow, see Wang \cite{Wang99} or Lin \cite{Lin03}.

\begin{theorem} \label{main2}
There exists $\ep_2>0$ such that if $(f,u)$ be a solution of \eqref{eq1} with $E(f(0)) \leq \ep_2$, then 
$\|df\|_{L^\infty(M)}(t) \to 0$ as $t \to \infty$ and $u(x,t) \to -\infty$ as $t \to \infty$ for all $x \in M$.

In particular, there exists $t_n \to \infty$ such that $f(\cdot,t_n)$ converges to a point uniformly.
\end{theorem}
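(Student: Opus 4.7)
The plan is to adapt the standard small-energy strategy for the usual harmonic map flow to \eqref{eq1}, with the crucial extra input that for sufficiently small $\ep_2$ the factor $e^{-2u}$ stays bounded below uniformly in time, which allows the energy identity to be exploited in the familiar way. By \Cref{main1} the solution $(f,u)$ is smooth on $M \times [0,\infty)$. The first line of \eqref{eq2} together with the conformal invariance of $E$ gives
\begin{equation*}
\frac{d}{dt} E(f(t)) = -\int_M e^{-2u}|\tau_{g_0}(f)|^2\, dvol_{g_0} \le 0,
\end{equation*}
so $E(f(t)) \le \ep_2$ for all $t \ge 0$. Choosing $\ep_2$ below the $\ep$-regularity threshold of \cite{P23} yields a time-uniform pointwise bound $\sup_M |df|^2(\cdot,t) \le C(\ep_2)$ with $C(\ep_2) \to 0$ as $\ep_2 \to 0$.

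Next, setting $v := e^{2u}$, the second line of \eqref{eq2} is the pointwise linear ODE $v_t + 2av = 2b|df|^2$ with $v(\cdot,0) = 1$, so by Duhamel
\begin{equation*}
v(x,t) = e^{-2at} + 2b \int_0^t e^{-2a(t-s)} |df|^2(x,s)\, ds \le \max\bigl(1,\, bC(\ep_2)/a\bigr),
\end{equation*}
and hence $e^{-2u}$ is uniformly bounded below. Combining this with the global energy identity $\int_0^\infty \int_M e^{-2u}|\tau_{g_0}(f)|^2 = E(f(0)) - \lim_t E(f(t)) < \infty$ gives $\int_0^\infty \|\tau_{g_0}(f)(t)\|_{L^2}^2\, dt < \infty$, so there is a sequence $t_n \to \infty$ with $\|\tau_{g_0}(f)(t_n)\|_{L^2} \to 0$.

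The $\ep$-regularity together with standard interior parabolic estimates and smoothness gives uniform $C^k$ bounds on $f(\cdot,t_n)$, so along a subsequence $f(\cdot,t_n)$ converges smoothly to a harmonic map $f_\infty : M \to N$ with $E(f_\infty) \le \ep_2$. For $\ep_2$ below the energy gap for harmonic maps from a compact Riemann surface into $N$, $f_\infty$ is constant, so $E(f(t_n)) \to 0$. Monotonicity of $E$ upgrades this to $E(f(t)) \to 0$, and another application of the $\ep$-regularity gives $\|df\|_{L^\infty(M)}(t) \to 0$. Plugging this decay back into the Duhamel formula for $v$ shows that $v(x,t) \to 0$ uniformly in $x$, i.e., $u(x,t) \to -\infty$ for every $x$. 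Finally $\mathrm{osc}_M f(\cdot,t) \le \mathrm{diam}(M,g_0)\,\|df\|_{L^\infty(M)}(t) \to 0$; choosing a subsequence $t_n$ along which $f(x_0,t_n)$ converges in $N$, possible by compactness of $N$, gives uniform convergence of $f(\cdot,t_n)$ to that limit point.

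The principal obstacle is making the $\ep$-regularity of \cite{P23} genuinely time-global and independent of the behavior of $u$, and then establishing enough higher-order control along $t_n$ to pass to the limit in $\tau_{g_0}(f)$; this is where the fact that no finite-time singularity occurs (\Cref{main1}) and the uniform upper bound on $v$ coming from the ODE argument are both essential. Once these are in hand, the interplay between the Duhamel formula for $v$ and the small-energy harmonic map gap makes the remaining steps routine.
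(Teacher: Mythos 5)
There is a genuine gap, and it sits at the very first substantive step: the claim that choosing $\ep_2$ small yields a \emph{time-uniform} pointwise bound $\sup_M |df|^2(\cdot,t)\le C(\ep_2)$ with $C(\ep_2)\to 0$. The $\ep$-regularity available for this flow (\Cref{ep reg}) is not the parabolic small-energy estimate of the ordinary harmonic map flow: it requires, in addition to $\sup_t E(B_{2r},t)\le \ep_1$, the hypothesis $\int_{B_{2r}\times\{T-\delta r^2\}} e^{18u}\le \ep_1$ on the initial slice of each parabolic cylinder, and its conclusion is a H\"older bound by a constant depending on $T$, $\delta$, $r$ --- not a bound proportional to the energy, and not one uniform on $[0,\infty)$. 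Your argument is therefore circular: to get the uniform gradient bound you need control of $e^{u}$ (to verify the $e^{18u}$ hypothesis and to keep the equation $e^{2u}f_t=\Delta f+A(df,df)$ uniformly parabolic), but your only source of control on $e^{u}$ is the Duhamel formula fed by that same gradient bound. The same unproved uniform regularity is invoked again later (``uniform $C^k$ bounds on $f(\cdot,t_n)$'' and the final upgrade from $E(f(t))\to 0$ to $\|df\|_{L^\infty}\to 0$), so the gap propagates through the whole argument. The subsequent steps (lower bound on $e^{-2u}$, hence $\int_0^\infty\|\tau_{g_0}(f)\|_{L^2}^2\,dt<\infty$, a harmonic limit, the energy gap) would all be fine \emph{if} the uniform gradient bound were available, but that bound is exactly the content of the theorem being proved.

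The paper breaks this circle without any pointwise parabolic regularity and without ever extracting a harmonic limit. It works with integral quantities: \Cref{t_n p} gives $\int_0^\infty\int_M e^{2u}|f_t|^4<\infty$ and $\int_M e^{2u}|f_t|^4(t)\to 0$; \Cref{e pu} controls $\int_M e^{pu}(t)$ by $\int_s^t\int_M|df|^p$; combining these with the elliptic $L^2$ estimate, the interpolation of \Cref{Sobolev}, and Gronwall's inequality applied to $X(t)=\left(\int_M|\nabla df|^2(t)\right)^2$ yields $\int_M|\nabla df|^2(t)\to 0$ (\Cref{W22}), and a second Gronwall argument at the $L^3$ level gives $\int_M|\nabla df|^3(t)\to 0$ (\Cref{W23}); only then does Sobolev embedding produce $\|df\|_{L^\infty(M)}(t)\to 0$, after which the ODE for $e^{2u}$ gives $u\to-\infty$ exactly as in your last step. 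If you want to salvage your outline, you would need to first establish the $W^{2,3}$ (or comparable) decay by such integral bootstrapping; the energy-gap detour is then unnecessary.
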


Without small energy assumption, we can have the following convergence result at time infinity away from finitely many bubble points.

\begin{theorem} \label{main3}
Let $(f,u)$ be a solution of \eqref{eq1} and assume that all sequential bubble points are uniform bubble points.
Then there exists $t_n \to \infty$ such that $f(\cdot,t_n)$ converges to a harmonic map $f_{\infty} (\cdot)$ in $W^{1,2}$ on any compact set $K \subset M \setminus S$ where $S = \{x_1, \cdots, x_k\}$ is the set of uniform bubble points.
\end{theorem}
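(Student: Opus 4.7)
The plan is to combine the $\ep$-regularity estimates from \cite{P23} with a pointwise analysis of the conformal factor on the good set $M \setminus S$. The central difficulty, as flagged in the introduction, is that the dissipation identity \eqref{tension control} only controls the \emph{weighted} quantity $\iint e^{-2u}|\tau(f)|^2$, so a sequence of times along which this is small does not automatically give smallness of the unweighted tension field. My idea is to use the uniform bubble hypothesis to pin $|df|$ down uniformly in time on every compact $K \subset M\setminus S$, and to leverage that into a uniform \emph{upper} bound on $u$ that kills the weight.

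First I would use the uniform bubble hypothesis to produce $r_K>0$ with $\int_{B_{r_K}(x)}|df|^2(t) < \ep_1$ for every $x \in K$ and every $t \geq 0$, where $\ep_1$ is the threshold in \eqref{blow up}. The $\ep$-regularity for CHF from \cite{P23}, combined with parabolic bootstrapping, then gives uniform bounds $\|df\|_{C^k(K)}(t) \leq C(K,k)$ for each $k$, independent of $t$; in particular $|df|^2 \leq C_K$ on $K\times[0,\infty)$. Next, at each fixed $x\in K$ the identity $u_t = b e^{-2u}|df|^2 - a$ is dominated by the scalar ODE $\dot w = bC_K e^{-2w}-a$ with $w(0)=0$. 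Its unique equilibrium $w^{*} = \tfrac{1}{2}\log(bC_K/a)$ is asymptotically stable, and an ODE comparison (using that $u-w$ cannot cross $0$ upward, since at any such crossing one has $u_t - w_t \leq 0$) together with monotonicity of $w$ toward $w^{*}$ yields
\[
u(x,t) \leq M_K := \max\!\bigl(0,\tfrac{1}{2}\log(bC_K/a)\bigr), \qquad (x,t)\in K\times[0,\infty),
\]
so that $e^{-2u(x,t)} \geq e^{-2M_K} > 0$ on $K\times[0,\infty)$.

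With these two ingredients the remainder is standard. Since $E$ is nonincreasing and nonnegative, \eqref{tension control} integrated to infinity gives $\int_0^\infty\!\int_M e^{-2u}|\tau(f)|^2 \, dvol_{g_0}\,dt < \infty$, so one can pick $t_n \to \infty$ with $\int_M e^{-2u(t_n)}|\tau(f)(t_n)|^2 \to 0$. The uniform lower bound on $e^{-2u}$ on $K$ then yields $\|\tau_{g_0}(f)(t_n)\|_{L^2(K)} \to 0$ for every compact $K\subset M\setminus S$. Combined with the uniform $C^k$ bounds on $f(\cdot,t_n)$, a diagonal Arzel\`a--Ascoli argument over an exhaustion of $M\setminus S$ by compact sets produces a single subsequence, still denoted $t_n$, with $f(\cdot,t_n) \to f_\infty$ in $C^k_{loc}(M\setminus S)$ for every $k$. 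Passing to the limit in $\tau_{g_0}(f(t_n)) \to 0$ gives $\tau_{g_0}(f_\infty)=0$, so $f_\infty$ is a smooth harmonic map on $M\setminus S$; and $C^\infty_{loc}$ convergence implies $W^{1,2}$ convergence on each compact $K\subset M\setminus S$.

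The main obstacle is the uniform upper bound on $u$. Without it the weight $e^{-2u}$ may degenerate and \eqref{tension control} carries no information about the honest $L^2$ norm of $\tau$ on $K$, so the strong compactness step fails. It is the pointwise (non-differential in space) ODE structure of the $u$-equation, coupled with the local uniform $|df|$ bound provided by the uniform bubble assumption, that makes this step go through.
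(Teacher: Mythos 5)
Your overall strategy --- pin down $|df|$ pointwise on compact subsets of $M\setminus S$, deduce an upper bound on $u$ by ODE comparison, and thereby convert the weighted dissipation \eqref{tension control} into an honest $L^2$ bound on the tension field --- is appealing, but it has a genuine gap at its very first step, and that step is where all the difficulty of the theorem lives. The $\ep$-regularity available for this flow (\Cref{ep reg}) does \emph{not} give bounds $\|df\|_{C^k(K)}(t)\le C(K,k)$ uniformly in $t$: its hypotheses require, in addition to small local energy, the bound $\int_{B_{2r}\times\{T-\delta r^2\}}e^{18u}\le\ep_1$ on the initial slice of each parabolic cylinder, and its conclusions depend on $T$. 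To apply it on cylinders ending at arbitrarily large times you would need to control $\int_{B_{2r}}e^{18u}(t)$ for all large $t$; by \Cref{e pu} this requires controlling $\iint|df|^{18}$, which in turn requires exactly the kind of higher derivative bounds on $f$ you are trying to produce. Your argument is therefore circular: the pointwise bound $|df|\le C_K$ on $K\times[0,\infty)$ that feeds your ODE comparison for $u$ cannot be extracted from the stated $\ep$-regularity without first bounding $u$ from above. (A symptom of the problem: if uniform-in-time $C^k$ bounds on $K$ were available, the theorem would follow almost immediately by Arzel\`a--Ascoli, and your route would yield $C^\infty_{loc}$ convergence, far stronger than the $W^{1,2}$ convergence actually claimed.)

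The paper's proof takes a more delicate route precisely to avoid this circularity, and it never bounds $e^{-2u}$ from below. It works instead with the integral quantity $\int_{B_{2r}}|\nabla^2(f\varphi)|^2$ and closes a Gronwall inequality (\Cref{W22-local}) in which the dangerous term $\int_{B_{2r}}e^{6u}\varphi^4(t)$ is controlled via \Cref{e pu} by $\int_T^t\int|df|^6$, which is reabsorbed into the $W^{2,2}$ quantity through \Cref{Sobolev}. The Gronwall output contains a factor growing linearly in $t$, and this is killed only by evaluating along the special time sequence $t_n$ of \eqref{t_n} with $p=2$, for which $t_n\int_M e^{2u}|f_t|^4(t_n)\to0$; this absorption mechanism is entirely absent from your proposal, where $t_n$ is chosen only so that the weighted dissipation is small. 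The resulting bound $\|\tau(f)(t_n)\|_{L^2(B_r)}^2\le C\ep_2$ is merely bounded (not tending to zero), and the conclusion then comes from the classical compactness theory for approximate harmonic maps with $L^2$-bounded tension fields, together with the covering and diagonal argument over $r_i\to0$ that identifies the finite exceptional set with the uniform bubble points.
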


For the definition of sequential bubble points and uniform bubble points, see \Cref{bubble points}.
The assumption that all sequential bubble points are uniform bubble points says that the energy concentration is uniform in time.
This assumption is needed to avoid the case when a bubble is traveling inside $N$ forever and does not converge.

Throughout the paper, all the computations are obtained with respect to the metric $g_0$ unless the metric is specified.

\section{Local and global estimate}
\label{sec2}

In this section we show some estimates about CHF for harmonic maps.
First we review that how local energy behaves and when we get smoothness locally.
Then we develop parallel global version of the estimates similar to \cite{P23}.

\begin{lemma}[Park \cite{P23}, Lemma 2] \label{volume}
Let $(f,u)$ be a solution of \eqref{eq2} on $M \times [0,\infty)$.
Then the volume with respect to the metric $g(t) = e^{2u}g_0$ is finite for all time.
Also, for any $t_1 < t_2$,
\begin{equation}
E(t_1) - E(t_2) = \int_{t_1}^{t_2} \int_{M} e^{2u}|f_t|^2.
\end{equation}
\end{lemma}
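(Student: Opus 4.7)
The plan is to couple two natural evolution identities: one for the Dirichlet energy $E(f(t))$ and one for the $g$-volume $V(t) := \int_M e^{2u(\cdot,t)} \, dvol_{g_0}$.

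For the energy identity, I would differentiate $E(f(t)) = \frac{1}{2}\int_M |df|^2 \, dvol_{g_0}$ in $t$, integrate by parts in space, and use that the normal term $A(f)(df,df)$ in $\tau_{g_0}(f) = \Delta f + A(f)(df,df)$ is orthogonal to $f_t \in T_f N$. This gives
\[
\frac{d}{dt} E(f(t)) = -\int_M \langle \tau_{g_0}(f), f_t \rangle_h \, dvol_{g_0} = -\int_M e^{2u}|f_t|^2 \, dvol_{g_0},
\]
after substituting $\tau_{g_0}(f) = e^{2u} f_t$ from the first equation of \eqref{eq2}. Integrating from $t_1$ to $t_2$ yields the stated identity, and in particular $E(t) \le E(0)$ for all $t \ge 0$.

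For the volume, differentiating under the integral sign and substituting $u_t = be^{-2u}|df|^2 - a$ gives
\[
V'(t) = \int_M 2 u_t \, e^{2u} \, dvol_{g_0} = 4b\, E(f(t)) - 2a\, V(t).
\]
Combined with the energy bound $E(t) \le E(0)$, this is the linear differential inequality $V'(t) + 2a V(t) \le 4bE(0)$; multiplying by $e^{2at}$ and integrating,
\[
V(t) \le V(0)\, e^{-2at} + \frac{2b E(0)}{a}\bigl(1 - e^{-2at}\bigr) \le \max\!\bigl(V(0),\, 2b E(0)/a\bigr),
\]
uniformly in $t$. Since $V(0) = \mathrm{Vol}_{g_0}(M) < \infty$, this gives $V(t) < \infty$ for every $t \ge 0$.

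The algebraic heart of the proof is just the cancellation of the second fundamental form in the $E$-derivative together with a direct substitution of the $u$-equation in the $V$-derivative. The only real obstacle is justifying the differentiation under the integral and the integration by parts; this is routine on the smooth portion of the solution, and the isolated singular points of the weak solution from \Cref{weak thm} can be handled by working first on a time interval avoiding them and then passing to the limit using lower semicontinuity of $E$ and monotone convergence for $V$.
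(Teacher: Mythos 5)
Your proof is correct. The paper does not reprove this lemma---it is quoted from \cite{P23} (Lemma 2)---but your derivation is the standard one underlying it: the energy identity via integration by parts and the orthogonality of $A(f)(df,df)$ to $f_t\in T_fN$, and the volume bound via $V'(t)+2aV(t)=4bE(t)\le 4bE(0)$ followed by the integrating factor $e^{2at}$, which is exactly the same mechanism the paper reuses later (e.g.\ in \Cref{e pu} and in \Cref{local energy lemma}).
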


Note that the energy over whole manifold $M$ is decreasing.
Hence we denote $E(t) \leq E(0) = E_0$.

The following lemma is so-called energy gap property of harmonic maps.

\begin{lemma}[Parker \cite{P96} or Sacks-Uhlenbeck \cite{SU81}] \label{energy gap}
There is $\ep_0>0$ such that, if $f : M \to N$ is a nontrivial harmonic maps, then $E(f) \geq \ep_0$.
\end{lemma}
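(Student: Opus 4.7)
The plan is to show that any harmonic map of sufficiently small energy must be constant, which immediately yields the gap. The proof uses two ingredients: the $\varepsilon$-regularity theorem for harmonic maps (pointwise bound on $df$ from small $L^2$ energy), and a global integration-by-parts estimate on the closed surface $M$.

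First I would invoke $\varepsilon$-regularity: there exist $\varepsilon_1 > 0$ and a radius $r_0 > 0$ depending only on $(M,g_0)$ such that whenever $f$ is harmonic on a geodesic ball $B_{r_0}(x) \subset M$ with $\int_{B_{r_0}(x)} |df|^2 < \varepsilon_1$, one has $\sup_{B_{r_0/2}(x)} |df|^2 \leq C r_0^{-2} \int_{B_{r_0}(x)} |df|^2$. Covering the compact surface $M$ by finitely many such balls, the hypothesis is met on every cover ball once $E(f) < \varepsilon_1$, and hence $\|df\|_{L^\infty(M)}^2 \leq C_M\, E(f)$ for a constant $C_M$ depending only on the domain.

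Next I would use the embedded equation $\Delta f = -A(f)(df,df)$, which gives $|\Delta f| \leq C_N |df|^2$. Testing against $f - \bar f$, where $\bar f$ is the mean of $f$ over $M$, and integrating by parts componentwise in the ambient $\R^L$ yields
\[
2\, E(f) \;=\; -\int_M \langle f - \bar f, \Delta f \rangle \;\leq\; \|f - \bar f\|_{L^2(M)}\, \|\Delta f\|_{L^2(M)}.
\]
Applying the Poincar\'e inequality $\|f - \bar f\|_{L^2} \leq C_P \sqrt{2 E(f)}$ together with the $L^\infty$ bound from the previous paragraph to estimate $\|\Delta f\|_{L^2} \leq C_N \|df\|_{L^\infty} \sqrt{2 E(f)}$, one obtains $E(f) \lesssim E(f)^{3/2}$. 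This forces either $E(f) = 0$ or $E(f) \geq \varepsilon_0$ for a universal $\varepsilon_0 > 0$. In the first case $df \equiv 0$ and $f$ is constant, which is trivial, so every nontrivial harmonic map satisfies $E(f) \geq \varepsilon_0$.

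The main obstacle is the $\varepsilon$-regularity step, which is the essential content of Sacks--Uhlenbeck's original argument and requires nontrivial elliptic regularity (hole-filling, Morrey-type estimates, or the more modern Rivi\`ere approach). Once it is granted, the remainder is a direct Poincar\'e plus Cauchy--Schwarz computation that is very robust in two dimensions and needs nothing about the target beyond $\|A\|_\infty \leq C_N$.
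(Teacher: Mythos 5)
Your argument is correct. Note that the paper does not prove this lemma at all --- it is quoted from Parker and Sacks--Uhlenbeck --- so there is no in-text proof to compare against; what you have written is a legitimate self-contained proof of the cited fact. Your two ingredients are sound: the $\varepsilon$-regularity estimate $\sup_{B_{r_0/2}}|df|^2\le Cr_0^{-2}\int_{B_{r_0}}|df|^2$ is exactly the Sacks--Uhlenbeck Main Estimate (valid for harmonic maps on a surface, where the $L^2$ energy is the critical, conformally invariant quantity), and the global step is a clean computation: on a closed surface $\int_M|df|^2=-\int_M\langle f-\bar f,\Delta f\rangle$ since $\int_M\Delta f=0$, Poincar\'e gives $\|f-\bar f\|_{L^2}\le C_P\|df\|_{L^2}$, and $\|\Delta f\|_{L^2}=\|A(f)(df,df)\|_{L^2}\le C_N\|df\|_{L^\infty}\|df\|_{L^2}$, which yields $1\le C\,\|df\|_{L^\infty}\le C\,(C_ME(f))^{1/2}$ whenever $E(f)>0$ and $2E(f)$ is below the regularity threshold; taking $\ep_0$ to be the minimum of that threshold and $(C^2C_M)^{-1}$ finishes it. This differs slightly from Sacks--Uhlenbeck's own global step, which instead observes that small $\|df\|_{L^\infty}$ forces the image into a geodesically convex ball and then applies a convexity/maximum-principle argument to conclude constancy; your Poincar\'e--Cauchy--Schwarz version needs only the bound $\|A\|_\infty\le C_N$ and compactness of $M$, so it is, if anything, more elementary once $\varepsilon$-regularity is granted. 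Two cosmetic points: the constant in the $\varepsilon$-regularity estimate also depends on the target through $C_N$, not only on the domain, and you should state explicitly that the covering requires $r_0$ below the injectivity radius of $(M,g_0)$; neither affects the validity of the proof.
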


Even though global energy is decreasing, local energy may be increasing at some regions.
Similar to \cite{S85}, we have the following local energy lemma, saying that local energy can increase by the amount controlled by $\frac{t_2-t_1}{(r_2-r_1)^2}$.

\begin{lemma}[Park \cite{P23}, Proposition 14] \label{local energy lemma}
Let $(f,u)$ be a solution of \eqref{eq2} on $M \times [0,T]$ and smooth on $M \times [0,T)$.
Choose $t_1 < t_2 < T$ and some ball $B_{2r}$ with cut-off function $\varphi \in C^\infty_c(B_{2r})$ such that $|\nabla \varphi| \leq \frac{2}{r}$.
Then
\begin{equation}
\begin{aligned}
\int_{t_1}^{t_2} \int_{B_{2r}} e^{2u}|f_t|^2 \varphi^2 +& \int_{B_{2r}}|df|^2 \varphi^2 (t_2) - \int_{B_{2r}}|df|^2 \varphi^2 (t_1)\\
&\le \frac{4^2}{ar^2} (e^{2at_2}-e^{2at_1}) E_0.
\end{aligned}
\end{equation}
In particular, for $r_1<r_2$ and $t_1 < t_2<T$,
\begin{equation} \label{local energy est}
E(B_{r_1},t_2) - E(B_{r_2},t_1) \leq C \frac{t_2-t_1}{(r_2-r_1)^2}
\end{equation}
where $C$ depends on $E_0,T$.
\end{lemma}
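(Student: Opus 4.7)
The plan is to mimic Struwe's classical local energy estimate for the harmonic map flow, differentiating $\int |df|^2 \varphi^2$ in time and carefully handling the extra conformal factors produced by the evolution of $u$. The only genuine obstacle is that the equation $f_t = e^{-2u}\tau(f)$ introduces a factor $e^{-2u}$ whose sup-norm is \emph{not} available for free; controlling it is the whole point of the exponential $e^{2at}$ that appears on the right-hand side.

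First I would compute
\[
\frac{d}{dt}\frac{1}{2}\int|df|^2\varphi^{2}=\int\langle\nabla f_t,\nabla f\rangle\varphi^{2}=-\int f_t\Delta f\,\varphi^{2}-2\int f_t\langle\nabla f,\nabla\varphi\rangle\varphi.
\]
Substituting $\Delta f=e^{2u}f_t-A(f)(df,df)$ from \eqref{eq2} and using that $A(f)(df,df)$ is normal to $N$ while $f_t\in T_fN$, the first term becomes $-\int e^{2u}|f_t|^{2}\varphi^{2}$, giving the clean identity
\[
\frac{d}{dt}\frac{1}{2}\int|df|^{2}\varphi^{2}+\int e^{2u}|f_t|^{2}\varphi^{2}=-2\int f_t\langle\nabla f,\nabla\varphi\rangle\varphi.
\]
Next I would split the right-hand side by weighted Young's inequality, putting the $f_t$ factor with weight $e^{u}$ and the $\nabla f$ factor with weight $e^{-u}$, so as to absorb half of the $\int e^{2u}|f_t|^{2}\varphi^{2}$ back into the left side and leave a term of the form $C\int e^{-2u}|\nabla f|^{2}|\nabla\varphi|^{2}$.

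The crucial step is bounding $e^{-2u}$ pointwise. The ODE $u_t=be^{-2u}|df|^{2}-a\ge -a$ together with $u(0)=0$ yields $u(x,t)\ge -at$, hence
\[
e^{-2u(x,t)}\le e^{2at}.
\]
This is the key a priori bound that substitutes for a genuine $L^{\infty}$ control of the metric. Combining it with $|\nabla\varphi|\le 2/r$ and the global energy bound $\int|\nabla f|^{2}\le 2E_{0}$ gives
\[
\int e^{-2u}|\nabla f|^{2}|\nabla\varphi|^{2}\le \frac{C\,e^{2at}}{r^{2}}E_{0},
\]
and integration of the differential inequality from $t_1$ to $t_2$ produces the stated bound $\frac{16}{ar^{2}}(e^{2at_{2}}-e^{2at_{1}})E_{0}$ after tracking the numerical constants.

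Finally, for the ``in particular'' statement I would run the same computation with a cutoff $\varphi$ that equals $1$ on $B_{r_1}$ and is supported in $B_{r_2}$ with $|\nabla\varphi|\le 2/(r_2-r_1)$; the same argument then gives a bound of the form $\frac{16E_{0}}{a(r_{2}-r_{1})^{2}}(e^{2at_{2}}-e^{2at_{1}})$, and the mean value theorem $e^{2at_{2}}-e^{2at_{1}}\le 2a\,e^{2aT}(t_{2}-t_{1})$ absorbs the exponential factor into a constant $C=C(E_{0},T)$, yielding \eqref{local energy est}. The main difficulty, and the only place where CHF genuinely differs from the usual harmonic map flow estimate, is the pointwise control $e^{-2u}\le e^{2at}$ in the third step; once that is in hand the rest is a direct adaptation of Struwe's argument.
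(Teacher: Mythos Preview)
Your argument is correct and is precisely the intended one: differentiate the localized energy, use $\langle f_t,A(f)(df,df)\rangle=0$, split the cross term by weighted Young, and control $e^{-2u}$ via the pointwise bound $u\ge -at$ coming from $u_t\ge -a$ with $u(0)=0$. With the Young weight chosen so that half of $\int e^{2u}|f_t|^2\varphi^2$ is absorbed, the constants match the stated $16/(ar^2)$ exactly, and the ``in particular'' follows as you describe; this is the same proof as in the cited reference.
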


Next, we investigate when the finite time singularity occurs.
The author in \cite{P23} showed that if local energy is small and conformal factor is small, we have local smoothness.

\begin{prop}[Park \cite{P23}, Lemma 25] \label{ep reg}
There is $\ep_1>0$ such that if $(f,u)$ be a smooth solution of \eqref{eq2} on $B_{2r} \times [T-\delta r^2,T)$ satisfying, for all $r$ small enough,
\[
\sup_{t \in [T-\delta r^2,T]} E(B_{2r},t) \leq \ep_1 \quad \text{ and } \quad \int_{B_{2r} \times \{T-\delta r^2\}} e^{18u} \leq \ep_1,
\]
then H{\"o}lder norms of $f$ and $u$ on $B_r \times [T-\delta r^2,T]$ and their derivatives are all bounded by constants only depending on $T$, $\delta$, $r$, $\ep_1$ and $C_N$.
\end{prop}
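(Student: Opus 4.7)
The plan is to adapt Struwe's local $\epsilon$-regularity for two-dimensional harmonic map flow to the coupled system \eqref{eq2}, viewing the factor $e^{-2u}$ as a variable but controlled weight thanks to the smallness of $\int_{B_{2r}} e^{18u}$ on the initial time slice.

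First I would derive a pointwise parabolic inequality for the energy density $e(f) := |df|^2$. A direct computation using $f_t = e^{-2u}(\Delta f + A(f)(df,df))$ together with the Bochner formula on the surface $(M,g_0)$ produces, schematically,
\begin{equation*}
(\partial_t - e^{-2u}\Delta)\, e(f) \;\leq\; C\, e^{-2u}\, e(f)^2 \;+\; (\text{cross terms in } du \otimes d\,e(f)),
\end{equation*}
and the cross terms are reabsorbed with the help of the scalar equation $u_t = b e^{-2u}|df|^2 - a$, which exchanges gradients of $u$ for gradients of $e(f)$ at the price of extra weights in $e^{-2u}$. Testing against $\varphi^{2} e(f)^{p-1}$ for a spatial cut-off supported in $B_{2r}$ and integrating in space-time, the hypothesis $E(B_{2r},t) \leq \ep_1$ combined with the 2D Sobolev inequality absorbs the quadratic term into the left-hand side and Moser iteration yields an $L^\infty$ bound on $e(f)$ on a slightly smaller cylinder. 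The structural assumption $b > 2C_N^2 + C_N$ is precisely what forces the absorption constants to come out with the correct sign.

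Second, I would propagate the initial smallness of $\int e^{18u}$ forward in time. Because $(e^{18u})_t = 18\,e^{18u}(b e^{-2u}|df|^2 - a)$, a Gronwall-type computation using the $L^\infty$ bound on $|df|^2$ obtained in the previous step shows that $\int_{B_{3r/2}} e^{18u}(t)$ remains comparable to $\ep_1$ throughout the parabolic cylinder. By H\"older's inequality this upgrades to $L^q$ bounds on $e^{-2u}$ for every finite $q$, closing the bootstrap loop and justifying the Moser step \emph{a posteriori}. Once pointwise bounds on $|df|$ and on $e^{\pm 2u}$ are in hand, the equation for $f$ becomes uniformly parabolic with bounded right-hand side; standard $L^p$ and Schauder parabolic theory then give H\"older bounds on $f$ and $u$ and on all their derivatives on $B_r \times [T-\delta r^2, T]$.

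The main obstacle is exactly this circularity between the two smallness hypotheses: the Moser iteration for $e(f)$ needs $L^q$ estimates on $e^{-2u}$, while those estimates depend in turn on pointwise control of $|df|^2$. The exponent $18$ in the hypothesis is tuned so that the weights arising during the iteration can be reabsorbed into the control on $\int e^{18u}$, and one must run a joint continuity-in-$t$ argument to verify that neither quantity escapes its a priori threshold before the other can be controlled.
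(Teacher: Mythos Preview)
The paper does not prove this proposition; it is quoted from \cite{P23}, Lemma~25, with no argument reproduced. So there is no proof here to compare your sketch against directly.

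From the surrounding machinery one can infer what the argument in \cite{P23} looks like: rather than a parabolic Moser iteration on a Bochner inequality for $|df|^2$, the method is to apply \emph{elliptic} $L^p$ theory to $\Delta f = e^{2u}f_t - A(df,df)$ at each time slice, estimate the $e^{2u}f_t$ term via H\"older and the propagation inequality $\int e^{pu}(t) \leq \int e^{pu}(s) + C\int_s^t\!\int |df|^p$ (this is Lemma~\ref{e pu}, also imported from \cite{P23}), bound $\int|df|^p$ back by $\int|\nabla^2 f|^{4p/(p+6)}$ via Lemma~\ref{Sobolev}, and close with Gronwall. The proofs of Propositions~\ref{W22} and \ref{W23} are exactly this template at $p=2,3$, and the exponent $18$ arises because the $W^{2,3}$ step produces a factor $\int e^{6u}|f_t|^3 \leq (\int e^{2u}|f_t|^4)^{3/4}(\int e^{18u})^{1/4}$. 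Your parabolic--Moser route is a legitimate alternative in spirit closer to Struwe, but it is not what is being cited.

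There is also a sign slip in your second step: a bound on $\int e^{18u}$ controls large \emph{positive} $u$, hence gives $L^q$ bounds on $e^{+2u}$, not on $e^{-2u}$ as you write. The upper bound on $e^{-2u}$ is in fact free, since $u_t \geq -a$ gives $u(t) \geq u(T-\delta r^2) - a\delta r^2$ pointwise. What the hypothesis on $\int e^{18u}$ actually buys is control of terms like $\int e^{4u}|f_t|^2$ and $\int e^{6u}|f_t|^3$ that appear when $e^{2u}f_t$ is fed through the elliptic estimates. Your circularity worry is genuine, but in the paper's framework it is broken by Lemma~\ref{e pu}, which trades $\int e^{pu}$ for a time integral of $\int|df|^p$ without needing a prior $L^\infty$ bound on $|df|$; that is what allows the Gronwall loop to close. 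The condition $b > 2C_N^2 + C_N$ is used in this paper for the $\int e^{2u}|f_t|^{p+2}$ hierarchy (the sign of $C_4(p)$), not for a Bochner absorption as you suggest.
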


To achieve the assumptions, we first fix $t_1$ and let $\delta = \delta(t_1,r,T)$ such that $T-\delta r^2 = t_1$.
Then the condition $\int_{B_{2r} \times \{t_1\}} e^{18u} \leq \ep_1$ can be achieved if we choose $r$ small enough, because the integral is only computed at initial time $t_1$ when $u$ is assumed to be smooth.
Finally, if $\lim_{r \to 0} \lim_{t \nearrow T} E(B_{2r},t) \leq \ep_1$, then we choose $r$ small enough to achieve the first assumption as well.
Hence, the only possibility for finite time singularity is when \eqref{blow up} holds.
This means that any singularity of CHF is not due to the geometric evolution but only due to the energy concentration as usual harmonic map flow.
So this proposition validates blow up condition \eqref{blow up}.

We state several important lemmas that are similar to those in \cite{P23} but over the whole domain manifold $M$.

\begin{lemma}
Let $(f,u)$ be a smooth solution of \eqref{eq2} on $M \times [0,T)$.
For $p \geq 0$, we have
\begin{equation} \label{f_t p+2 der}
\begin{split}
\frac{d}{dt} \int_{M} e^{2u}|f_t|^{p+2} \leq& \,\, 2a(p+1) \int_{M} e^{2u}|f_t|^{p+2}  - \frac{p+2}{2} \int_{M} |\nabla f_t|^2 |f_t|^p\\
& + \left( (p+2)C_N + 2(p+2)C_N^2 - 2b(p+1) \right) \int_{M} |df|^2 |f_t|^{p+2}.
\end{split}
\end{equation}
\end{lemma}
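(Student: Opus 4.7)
My plan is to begin with the map equation $f_t = e^{-2u}(\Delta f + A(f)(df,df))$ and differentiate it in time. Using the chain rule on the composition $A(f)$ and on the quadratic form $(df,df)$, I obtain
\begin{equation*}
e^{2u}f_{tt} \;=\; -2e^{2u} u_t\, f_t + \Delta f_t + DA(f)(f_t)(df,df) + 2 A(f)(df, \nabla f_t).
\end{equation*}
Then
\begin{equation*}
\frac{d}{dt}\bigl(e^{2u}|f_t|^{p+2}\bigr) \;=\; 2u_t e^{2u}|f_t|^{p+2} + (p+2) e^{2u}|f_t|^p \langle f_t, f_{tt}\rangle,
\end{equation*}
and substituting the previous formula makes the $u_t$ terms combine into a single factor $-2(p+1)e^{2u}u_t |f_t|^{p+2}$. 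At this point I replace $e^{2u}u_t = b|df|^2 - a e^{2u}$ using the conformal equation; this produces the advertised $+2a(p+1)\int e^{2u}|f_t|^{p+2}$ term and contributes $-2b(p+1)\int|df|^2|f_t|^{p+2}$ to the final coefficient.

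Next I integrate over $M$ (closed, so no boundary terms) and integrate by parts the Laplacian term:
\begin{equation*}
(p+2)\int_M |f_t|^p \langle f_t, \Delta f_t\rangle \;=\; -(p+2)\int_M |f_t|^p|\nabla f_t|^2 - p(p+2)\int_M |f_t|^{p-2}\,|\langle f_t, \nabla f_t\rangle|^2.
\end{equation*}
For $p \geq 0$ the second term on the right is nonpositive, so I drop it, leaving $-(p+2)\int|f_t|^p|\nabla f_t|^2$.

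The two curvature-type terms are bounded pointwise. The $DA$ term gives
\begin{equation*}
(p+2)|f_t|^p \langle f_t, DA(f)(f_t)(df,df)\rangle \;\leq\; (p+2)C_N |f_t|^{p+2}|df|^2
\end{equation*}
directly from $\|DA\|_\infty \leq C_N$. For the mixed term I use $\|A\|_\infty \leq C_N$ and Young's inequality $2\alpha\beta \leq \epsilon^{-1}\alpha^2 + \epsilon\beta^2$ with $\alpha = |f_t|^{p/2}|\nabla f_t|$, $\beta = (p+2)C_N|f_t|^{p/2+1}|df|$, and $\epsilon = 2/(p+2)$:
\begin{equation*}
2(p+2)|f_t|^p\langle f_t, A(f)(df,df_t)\rangle \;\leq\; \tfrac{p+2}{2}|f_t|^p|\nabla f_t|^2 + 2(p+2)C_N^2\,|f_t|^{p+2}|df|^2.
\end{equation*}
Adding this to the $-(p+2)\int|f_t|^p|\nabla f_t|^2$ produced by the Laplacian leaves exactly $-\tfrac{p+2}{2}\int|f_t|^p|\nabla f_t|^2$, which is the gradient term on the right of the claimed inequality. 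Collecting the coefficients of $\int|df|^2|f_t|^{p+2}$ from the $DA$ step, the Young step, and the $-2b(p+1)$ contribution from the $u_t$ substitution yields $(p+2)C_N + 2(p+2)C_N^2 - 2b(p+1)$, matching the statement.

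There is no real obstacle here; the only delicate choice is the split in Young's inequality, which must put precisely $\tfrac{p+2}{2}$ on the $|\nabla f_t|^2$ side so that exactly half of the dissipation from integration by parts survives. The rest is bookkeeping, so the lemma follows by simply assembling these estimates.
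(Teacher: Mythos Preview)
Your proof is correct and follows essentially the same route as the paper: differentiate the map equation in time, pair with $f_t|f_t|^p$, integrate by parts on the Laplacian term (dropping the nonpositive $-p(p+2)\int|f_t|^{p-2}|\langle f_t,\nabla f_t\rangle|^2$), bound the $DA$ term by $C_N$, and split the mixed $A(df,\nabla f_t)$ term via Young's inequality so that exactly half of the dissipation survives. The only cosmetic difference is that the paper organizes the computation around $\langle (e^{2u}f_t)_t, f_t|f_t|^p\rangle$ rather than first expanding $e^{2u}f_{tt}$, but the algebra and the constants are identical.
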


\begin{proof}
Taking time-derivative of the first equation in \eqref{eq2} to get
\[
(e^{2u}f_t)_t = \Delta f_t + \frac{\partial}{\partial t}A(df,df).
\]
Taking inner product with $f_t |f_t|^p$ and integrating over $M$ gives
\[
\begin{split}
\int_M \langle (e^{2u}f_t)_t, f_t |f_t|^p \rangle =& \int_{M} \langle \Delta f_t, f_t |f_t|^p \rangle + \int_{M} \langle \frac{\partial}{\partial t}A(df,df),f_t |f_t|^p \rangle\\
=& -\int_{M} |\nabla f_t|^2 |f_t|^p - p \int_{M} (\langle \nabla f_t, f_t \rangle)^2 |f_t|^{p-2}\\
&+ \int_{M} \langle DA(df,df) \cdot f_t, f_t \rangle |t_t|^p + 2\int_{M} \langle DA(df,df_t),f_t |f_t|^p \rangle\\
\leq& -\int_{M} |\nabla f_t|^2 |f_t|^p + C_N \int_{M} |df|^2 |f_t|^{p+2} + 2C_N \int_{M} |df| |\nabla f_t| |f_t|^{p+1}\\
\leq& -\frac{1}{2} \int_{M} |\nabla f_t|^2 |f_t|^p + (C_N + 2C_N^2) \int_{M} |df|^2 |f_t|^{p+2}.
\end{split}
\]
On the other hand, LHS becomes
\[
\begin{split}
\int_{M} \langle (e^{2u}f_t)_t, f_t |f_t|^p \rangle =& \frac{1}{p+2} \frac{d}{dt} \int_{M} e^{2u}|f_t|^{p+2} + 2 \frac{p+1}{p+2} \int_{M} e^{2u} |f_t|^{p+2} u_t\\
=& \frac{1}{p+2} \frac{d}{dt} \int_{M} e^{2u} |f_t|^{p+2} + 2b \frac{p+1}{p+2} \int_{M} |df|^2 |f_t|^{p+2}\\
& - 2a \frac{p+1}{p+2} \int_{M} e^{2u} |f_t|^{p+2}.
\end{split}
\]
Combining the two, we get
\[
\begin{split}
\frac{d}{dt} \int_{M} e^{2u}|f_t|^{p+2} \leq& 2a(p+1) \int_{M} e^{2u}|f_t|^{p+2} - \frac{p+2}{2} \int_{M} |\nabla f_t|^2 |f_t|^p\\
&+ \left( (p+2)C_N + 2(p+2)C_N^2 - 2b(p+1) \right) \int_{M} |df|^2 |f_t|^{p+2}.
\end{split}
\]
\end{proof}

Now from the condition of $b > 2C_N^2 + C_N$,
\begin{equation}
-C_4(p) := (p+2)C_N + 2(p+2)C_N^2 - 2b(p+1) <0
\end{equation}
for all $p \geq 0$.
So,
\[
\frac{d}{dt} \int_{M} e^{2u}|f_t|^{p+2} \leq 2a(p+1) \int_{M} e^{2u}|f_t|^{p+2}
\]
and for any $t_1<t_2$, we get
\begin{equation} \label{f_t p+2}
\int_{M} e^{2u}|f_t|^{p+2} (t_2) - \int_{M} e^{2u}|f_t|^{p+2} (t_1) \leq 2a(p+1) \int_{t_1}^{t_2} \int_{M} e^{2u}|f_t|^{p+2}.
\end{equation}

In particular, for any $t>0$ we have
\begin{equation} \label{e 2u f_t est}
\int_{M} e^{2u}|f_t|^2 (t) \leq \int_{M} e^{2u}|f_t|^2 (0) + 2a \int_{0}^{t} \int_{M} e^{2u}|f_t|^2 \leq K(0) + 2a E(0)
\end{equation}
where we denote $K(0) = \int_{M} e^{2u}|f_t|^2 (0) $.

Integrating \eqref{f_t p+2 der} from $t_1$ to $t_2$ gives
\begin{equation} \label{f_t p+2 full}
\begin{split}
C_4(p) \int_{t_1}^{t_2} \int_{M} |df|^2 |f_t|^{p+2} &+ \frac{p+2}{2} \int_{t_1}^{t_2} \int_{M} |\nabla f_t|^2 |f_t|^p + \int_{M} e^{2u}|f_t|^{p+2}(t_2) - \int_{M} e^{2u}|f_t|^{p+2}(t_1)\\
\leq& \,\, 2a(p+1) \int_{t_1}^{t_2} \int_{M} e^{2u}|f_t|^{p+2}.
\end{split}
\end{equation}

\begin{prop} \label{int df^2 f_t^2}
Let $(f,u)$ be a smooth solution of \eqref{eq2} on $M \times [0,T)$.
Then for any $0 \leq t_1<t_2 \leq T$, we have
\begin{equation}
\int_{t_1}^{t_2}\int_{M} |df|^2 |f_t|^2 \leq C
\end{equation}
for some constant $C$ only depending on $E(0),K(0),T,C_4(0)$.
\end{prop}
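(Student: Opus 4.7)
The plan is to specialize the already-derived integrated inequality \eqref{f_t p+2 full} to the case $p=0$. This is precisely the version in which the quantity of interest $\int_{t_1}^{t_2}\int_M |df|^2|f_t|^2$ appears on the left-hand side with the coefficient $C_4(0)$, which is strictly positive by the standing hypothesis $b > 2C_N^2 + C_N$. So once I discard the non-negative leftover terms on the left and bound the right-hand side using prior estimates, I can simply divide through by $C_4(0)$ and finish.

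Concretely, I would drop the non-negative gradient term $\int_{t_1}^{t_2}\int_M |\nabla f_t|^2$ and the non-negative terminal-time boundary term $\int_M e^{2u}|f_t|^2(t_2)$ from the left of \eqref{f_t p+2 full}, leaving an inequality of the form
\[
C_4(0)\int_{t_1}^{t_2}\!\!\int_M |df|^2|f_t|^2 \le 2a \int_{t_1}^{t_2}\!\!\int_M e^{2u}|f_t|^2 + \int_M e^{2u}|f_t|^2(t_1).
\]
Both terms on the right are already controlled by results above: the pointwise-in-time estimate \eqref{e 2u f_t est} gives $\int_M e^{2u}|f_t|^2(t_1) \le K(0) + 2aE(0)$, and \Cref{volume} identifies $\int_0^T\!\int_M e^{2u}|f_t|^2$ with $E(0)-E(T) \le E_0$, which dominates the spacetime integral over $[t_1,t_2]$. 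Dividing by $C_4(0)$ produces the desired bound in the stated parameters.

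There is no real obstacle here; the argument is essentially bookkeeping on top of machinery already in place. The only things requiring care are sign conventions when discarding terms from the left versus retaining them on the right, and checking that $C_4(0) > 0$, which is exactly the role of the assumption on $b$. I would also note in passing that the listed $T$-dependence of $C$ is not actually used by this route, since both right-hand ingredients are bounded independently of $t_2 - t_1$ by the finiteness of the total energy dissipation.
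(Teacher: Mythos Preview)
Your proposal is correct and follows essentially the same route as the paper: specialize \eqref{f_t p+2 full} to $p=0$, discard the non-negative gradient and $t_2$-boundary terms, and bound the remaining right-hand side. The only cosmetic difference is that the paper bounds the spacetime term $2a\int_{t_1}^{t_2}\int_M e^{2u}|f_t|^2$ by applying the pointwise-in-time estimate \eqref{e 2u f_t est} and then integrating in $t$ (picking up a factor of $t_2-t_1\le T$, which is why $T$ appears in the stated constant), whereas you invoke the energy-dissipation identity of \Cref{volume} to bound it by $E_0$ directly; your observation that the $T$-dependence is unnecessary is therefore valid.
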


\begin{proof}
From \eqref{f_t p+2 full} and \eqref{e 2u f_t est}, we get the conclusion.
\end{proof}

\section{Proof of \Cref{main1}}
\label{sec3}

Now we prove the first main theorem.
We assume there is a finite time blow up at $(x_0,T)$.
Since blow up is a local nature, we only focus on some small ball $B$ centered at the bubble point $x_0$.
Now let $(f,u)$ be a smooth solution of CHF on $B \times [0,T)$ which blows up at $(x_0,T)$.
The blow up assumption implies
\begin{equation} \label{energy amount}
\lim_{t \nearrow T} \frac{1}{2}\int_{B}|df|^2(t) = K + \frac{1}{2}\int_{B} |df|^2 (T)
\end{equation}
where $K>0$ is the energy loss at $(x_0,T)$.
Equivalently, $K$ can be obtained by
\[
K = \lim_{r \to 0} \lim_{t \nearrow T} \frac{1}{2}\int_{B_r(x_0)} |df|^2(t).
\]
From \eqref{blow up}, $K \geq \ep_1$.

Let $\varphi \in C^{\infty}(B)$ be a cut-off function supported in $B_r = B_r(x_0)$ such that $0 \leq \varphi \leq 1$ on $B_r$, $\varphi \equiv 1$ on $B_{r/2}$ and $|\nabla \varphi| \leq \frac{C}{r}$.
For each $t \in [0,T)$, as in \cite{T04b}, define the local energy
\begin{equation}
\Theta_r(t) = \frac{1}{2} \int_{B_r} \varphi^2 |df(t)|^2(y) dy.
\end{equation}
Then
\[
\begin{split}
\frac{d \Theta}{dt} =& \int_{B_r} \varphi^2 \langle df, df_t \rangle = - \int_{B_r} \varphi^2 \langle \Delta f, f_t \rangle - 2 \int_{B_r} \varphi \langle df, \nabla \varphi \cdot f_t \rangle\\
=& - \int_{B_r} \varphi^2 e^{2u} |f_t|^2 - 2 \int_{B_r} \varphi \langle df, \nabla \varphi \cdot f_t \rangle\\
\leq& \frac{C}{r} \int_{B_r}  |df| |f_t| \\
 \leq& C \left( \int_{B_r} |df|^2 |f_t|^2 \right)^{\frac{1}{2}}.
\end{split}
\]
So
\begin{equation} \label{Theta eq}
\begin{split}
\Theta_r(t) - \Theta_r(s) \leq& C \int_{s}^{t} \left(  \int_{B_r}  |df|^2 |f_t|^2 \right)^{\frac{1}{2}} \\
\leq& C (t-s)^{\frac{1}{2}} \left( \int_{s}^{t} \int_{B_r} |df|^2 |f_t|^2 \right)^{\frac{1}{2}} \\
\leq& C (t-s)^{\frac{1}{2}}
\end{split}
\end{equation}
by \Cref{int df^2 f_t^2}.
Hence $\Theta_r(t)$ has a limit as $t \nearrow T$ and we have
\[
K = \lim_{t \nearrow T} \frac{1}{2}\int_{B}|df|^2(t) - \frac{1}{2}\int_{B} |df|^2 (T) = \lim_{t \nearrow T} \Theta_r(t) - \Theta_r(T)
\]
for any $r$.
So, take $t \nearrow T$ in \eqref{Theta eq} to get
\begin{equation} \label{K est}
\left| K + \Theta_{r}(T) - \Theta_{r}(s) \right| \leq C (T-s)^{\frac{1}{2}}
\end{equation}
for any $r$ and $s<T$.

Now fix $\delta>0$ such that
\begin{equation}
\delta < \max \{ \frac{1}{C}, \frac{K}{2}\}
\end{equation}
where $C,K$ are from \eqref{K est}.
Fix $s = T-\delta^4$ and choose $\ep$ small enough such that $|\Theta_{\ep r}(T)| < \frac{K}{10}$ and $|\Theta_{\ep r}(s)| < \frac{K}{10}$.
Then we can replace $r$ by $\ep r$ in \eqref{K est} to get
\[
\frac{8K}{10} < C (T-s)^{\frac{1}{2}} = C \delta^2 < \delta < \frac{K}{2}
\]
which is a contradiction.
Hence there is no finite time bubbling and the \Cref{main1} is proved.

\section{Infinite time analysis}
\label{sec4}

Harmonic map flows may develop finite time bubbling or infinite time bubbling.
And for infinite time bubbling, the limit map becomes harmonic and there is a sequence of time $t_n \to \infty$ such that the tension fields at $t_n$ tends to zero in $L^2$.
Hence, unlike finite time bubbling, no-neck properties can be obtained in infinite time bubbling.
(Note that energy identity can be obtained in both cases.)
This observation leads to the finite time bubbling example, if the initial map is in the homotopy class that contains harmonic spheres far from each other.

Now if we run the CHF for the same initial condition, since no finite time bubbling occurs, there should be infinite time bubbling.
But we even do not know whether the limit map is harmonic.
In fact, we can find sequences $t_n \to \infty$ such that
\[
t_n \int_{M} e^{-2u} |\tau(f)|^2(t_n) \to 0
\]
which is not strong enough so that we cannot guarantee $\|\tau(f)(t_n) \|_{L^2} \to 0$.

In this section, we try to overcome this difficulties by using extra regularity of CHF.

We first need the following higher estimate.

\begin{lemma}\label{higher est}
Let $(f,u)$ be a smooth solution of \eqref{eq2} on $M \times [0,\infty)$.
For $p \geq 1$ and $t_1 < t_2$, we have
\begin{equation}
\begin{split}
\iint_{M \times [t_1,t_2]} e^{2u}|f_t|^{p+2} \leq C  \iint_{M \times [t_1,t_2]} |\nabla f_t|^2 |f_t|^{p-1} + C \iint_{M \times [t_1,t_2]} |df|^2 |f_t|^{p+1}
\end{split}
\end{equation}
where $C$ only depends on $p$.
\end{lemma}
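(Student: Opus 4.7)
The plan is to \emph{invert} the PDE for $f$ in order to re-express $e^{2u}|f_t|^{p+2}$. Using the first equation of \eqref{eq2}, one factor of $e^{2u}f_t$ equals $\Delta f + A(f)(df,df)$, so pairing with $f_t|f_t|^p$ and integrating over $M$ gives
\begin{equation*}
\int_M e^{2u}|f_t|^{p+2} = \int_M \langle \Delta f, f_t\rangle |f_t|^p + \int_M \langle A(df,df), f_t\rangle |f_t|^p.
\end{equation*}
The second integral is immediately controlled by $C_N \int_M |df|^2 |f_t|^{p+1}$ via $\|A\|_\infty \le C_N$, producing the second term on the right-hand side of the lemma.

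Next, I would integrate by parts on the Laplacian term. Since $M$ is a closed Riemann surface there are no boundary contributions, and using $\nabla|f_t|^p = p|f_t|^{p-2}\langle f_t,\nabla f_t\rangle$ one obtains
\begin{equation*}
\int_M \langle \Delta f, f_t\rangle |f_t|^p = -\int_M \langle \nabla f, \nabla f_t\rangle |f_t|^p - p\int_M |f_t|^{p-2}\langle f_t, \nabla_i f\rangle\langle f_t, \nabla_i f_t\rangle.
\end{equation*}
Two Cauchy--Schwarz applications (first on the inner product with $f_t$, then on the sum over $i$) bound both integrands pointwise by $(1+p)|\nabla f_t|\,|df|\,|f_t|^p$. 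The Young-type split
\begin{equation*}
|\nabla f_t|\,|df|\,|f_t|^p \le \tfrac{1}{2}|\nabla f_t|^2 |f_t|^{p-1} + \tfrac{1}{2}|df|^2 |f_t|^{p+1}
\end{equation*}
then produces exactly the two integrands on the right-hand side of the lemma, and integrating in $t$ over $[t_1,t_2]$ finishes the proof with $C = C(p)$ (the dependence on the fixed constant $C_N$ is absorbed).

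The only mildly technical point is differentiating $|f_t|^p$ pointwise at zeros of $f_t$, which is precisely where the hypothesis $p \ge 1$ enters. I would dispatch this by the standard approximation, replacing $|f_t|^p$ by $(|f_t|^2 + \varepsilon)^{p/2}$, performing the integration by parts for each $\varepsilon > 0$, and then passing $\varepsilon \to 0$ by dominated convergence, with the dominating functions controlled by the very quantities $\int |\nabla f_t|^2 |f_t|^{p-1}$ and $\int |df|^2 |f_t|^{p+1}$ appearing on the right-hand side. Apart from this bookkeeping, the argument is a direct energy-type computation with no further obstacles — the key idea is simply that the PDE lets us trade one power of $e^{2u}|f_t|$ for $\Delta f + A(df,df)$, after which integration by parts converts everything into the two expressions claimed.
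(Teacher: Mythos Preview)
Your proof is correct and follows essentially the same route as the paper: pair the equation with $f_t|f_t|^p$, integrate by parts on the $\Delta f$ term, and apply Cauchy--Schwarz followed by Young's inequality. The only difference is that the paper drops the $A(df,df)$ term outright using the orthogonality $\langle A(df,df),f_t\rangle = 0$ (since $A$ is normal to $N$ and $f_t$ is tangent), whereas you bound it by $C_N\int |df|^2|f_t|^{p+1}$; your treatment is harmless, and your added $\varepsilon$-regularization for differentiating $|f_t|^p$ is extra care the paper omits.
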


\begin{proof}
From the first equation in \eqref{eq2}, taking inner product with $f_t |f_t|^{p}$ and integrating over $M \times [t_1,t_2]$ gives
\[
\begin{split}
\iint_{M \times [t_1,t_2] } e^{2u}|f_t|^{p+2} =& \iint_{M \times [t_1,t_2]} \langle f_t, \Delta f \rangle |f_t|^p\\
=& -\iint_{M \times [t_1,t_2]} \langle \nabla f_t, \nabla f \rangle |f_t|^p\\
& - p \iint_{M \times [t_1,t_2]} \langle f_t, \nabla f \rangle |f_t|^{p-2} \langle f_t, \nabla f_t \rangle\\
\leq& C \iint_{M \times [t_1,t_2]} |\nabla f_t|^2 |f_t|^{p-1} + C \iint_{M \times [t_1,t_2]} |df|^2 |f_t|^{p+1}.
\end{split}
\]
\end{proof}

Using above lemma and step-by-step increasing of regularity, we can get the following proposition.

\begin{prop} \label{t_n p}
Let $(f,u)$ be a smooth solution of \eqref{eq2} on $M \times [0,\infty)$.
For any $p \geq 0$, we have
\begin{align}
\int_{0}^{\infty} \int_{M} |df|^2 |f_t|^{p+2}, \int_{0}^{\infty} \int_{M} |\nabla f_t|^2 |f_t|^p \leq& C \tag{$A_p$} \label{est 1}\\
\int_{0}^{\infty} \int_{M} e^{2u} |f_t|^{p+2} \leq& C \tag{$B_p$} \label{est 2}\\
\lim_{t \to \infty} \int_{M} e^{2u}|f_t|^{p+2}(t) = 0 \tag{$C_p$} \label{est 3}
\end{align}
where the constant $C$ depends on $p$.
\end{prop}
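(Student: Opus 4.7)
The plan is a simultaneous induction on $p$, alternately applying \Cref{higher est} and \eqref{f_t p+2 full}.

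\textbf{Base case $p = 0$.} The bound $(B_0)$ is immediate from \Cref{volume}: $\iint_{M \times [0,\infty)} e^{2u}|f_t|^2 = E(0) - \lim_{t \to \infty} E(t) \leq E_0$. Plugging this into \eqref{f_t p+2 full} with $p = 0$, $t_1 = 0$, $t_2 \to \infty$, the right-hand side is bounded by $2a E_0$, the initial term equals the finite quantity $K(0)$, the final-time term is nonnegative and can be dropped, and $C_4(0) > 0$ by the standing hypothesis $b > 2C_N^2 + C_N$. This gives $(A_0)$.

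\textbf{Inductive step from $p-1$ to $p$, $p \geq 1$.} Assuming $(A_{p-1})$, integrate \Cref{higher est} over $[0,\infty)$: its right-hand side is a constant multiple of $\iint |\nabla f_t|^2 |f_t|^{p-1} + \iint |df|^2 |f_t|^{p+1}$, which is exactly controlled by $(A_{p-1})$; hence $(B_p)$. Feeding $(B_p)$ into \eqref{f_t p+2 full} at the current $p$, with $t_1 = 0$ and $t_2 \to \infty$, the right-hand side is finite, the initial term $\int_M e^{2u}|f_t|^{p+2}(0)$ is finite by smoothness of the initial data, and $C_4(p) > 0$, so we conclude $(A_p)$.

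\textbf{From $(B_p)$ to $(C_p)$.} Set $h(t) = \int_M e^{2u}|f_t|^{p+2}(t)$; then $(B_p)$ reads $h \in L^1(0,\infty)$. Discarding the two nonpositive terms on the right-hand side of \eqref{f_t p+2 der} gives $h'(t) \leq 2a(p+1) h(t)$, so $t \mapsto e^{-2a(p+1)t}h(t)$ is nonincreasing. If $\limsup_{t \to \infty} h(t) > 0$, pick $\epsilon > 0$ and a sequence $s_n \to \infty$ with $s_{n+1} - s_n \geq 1$ and $h(s_n) \geq \epsilon$. Monotonicity of the weighted quantity yields $h(t) \geq e^{-2a(p+1)(s_n - t)}\epsilon \geq e^{-2a(p+1)}\epsilon$ for every $t \in [s_n - 1, s_n]$, and summing over these disjoint intervals gives $\int_0^\infty h(t)\,dt = \infty$, contradicting $(B_p)$.

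\textbf{Main obstacle.} The delicate step is $(C_p)$: since the differential inequality for $h$ has a \emph{positive} coefficient, $h$ itself need not be monotone and $h \in L^1$ alone does not force pointwise decay, so a Barbalat-type argument is not directly available. The decisive observation is that the weighted function $e^{-2a(p+1)t}h(t)$ is monotone; this converts a single large value $h(s_n) \geq \epsilon$ into a uniform lower bound on a past interval of fixed length, which is the mechanism by which $L^1$-integrability contradicts persistent positive values.
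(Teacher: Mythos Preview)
Your proof is correct and follows essentially the same induction scheme as the paper: $B_0$ from \Cref{volume}, $B_p \Rightarrow A_p$ via \eqref{f_t p+2 full}, and $A_{p-1} \Rightarrow B_p$ via \Cref{higher est}. Your argument for $(C_p)$ is a minor variant---you exploit the monotonicity of $e^{-2a(p+1)t}h(t)$ to produce uniform lower bounds on past unit intervals, whereas the paper uses the integrated form \eqref{f_t p+2} directly to first show $\limsup = \liminf$ and then that the common limit is zero; both are equivalent applications of the same differential inequality.
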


\begin{proof}
The proof consists of two parts.
First, we show that
\begin{equation}
A_p \Longrightarrow B_{p+1}, \quad B_p \Longrightarrow C_p, \quad B_p  \Longrightarrow A_p.
\end{equation}
Then we show the initial step, where we only need to show $B_0$.

By \Cref{higher est}, clearly $A_p$ implies $B_{p+1}$.
Also, from \eqref{f_t p+2}, $B_p$ implies $C_p$.
To see this, suppose
\[
\limsup_{t \to \infty} \int_{M} e^{2u}|f_t|^{p+2}(t) = m_{sup} > m_{inf} = \liminf_{t \to \infty} \int_{M} e^{2u}|f_t|^{p+2}(t).
\]
Then for any $0<\ep < \frac{m_{sup}-m_{inf}}{2}$, we can choose sequences $\{s_n\},\{t_n\}$ such that $s_n, t_n \to \infty$, $s_n<t_n$, and that
\[
\int_{M} e^{2u}|f_t|^{p+2} (t_n) > m_{sup} - \ep, \quad \int_{M} e^{2u}|f_t|^{p+2} (s_n) < m_{inf} + \ep.
\]
But this contradicts to \eqref{f_t p+2} because
\[
0 < \int_{M} e^{2u}|f_t|^{p+2} (t_n) - \int_{M} e^{2u}|f_t|^{p+2} (s_n) \leq 2a(p+1) \int_{s_n}^{t_n} \int_{M} e^{2u}|f_t|^{p+2} \to 0
\]
as $n \to \infty$ by $B_p$.
So $\lim_{t \to \infty} \int_{M} e^{2u}|f_t|^{p+2}(t)$ exists.
It is easy to show that the limit is zero by $B_p$.

To show $B_p$ implies $A_p$, we take $t_2 \to \infty$ and $t_1 = 0$ in \eqref{f_t p+2 full}.

For the initial step $B_0$, the result directly comes from \Cref{volume}.
\end{proof}

\begin{cor} \label{t_n zero}
Let $(f,u)$ be a smooth solution of \eqref{eq2} on $M \times [0,\infty)$.
For any $p \geq 0$, we have
\begin{align}
\lim_{t_1 \to \infty} \lim_{t_2 \to \infty} \int_{t_1}^{t_2} \int_{M} |df|^2 |f_t|^{p+2}, \int_{t_1}^{t_2} \int_{M} |\nabla f_t|^2 |f_t|^p = &0 \tag{$A_p'$} \label{est 1'}\\
\lim_{t_1 \to \infty} \lim_{t_2 \to \infty} \int_{t_1}^{t_2} \int_{M} e^{2u} |f_t|^{p+2} = &0 \tag{$B_p'$} \label{est 2'}.
\end{align}
\end{cor}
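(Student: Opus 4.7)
The plan is to derive Corollary \ref{t_n zero} as an immediate consequence of Proposition \ref{t_n p} by a tail-of-a-convergent-integral argument. The key observation is that each integrand in the three estimates of Proposition \ref{t_n p} is nonnegative, and the full improper integrals over $M \times [0,\infty)$ are bounded by a finite constant $C = C(p)$.

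First I would handle \ref{est 1'}. Fix $p \geq 0$ and set $F_p(s,t) = \int_s^t \int_M |df|^2 |f_t|^{p+2}$. By \ref{est 1} in Proposition \ref{t_n p}, $F_p(0,t_2) \leq C$ for every $t_2$, and since the integrand is nonnegative, $F_p(0,t_2)$ is monotone nondecreasing in $t_2$. Hence $\lim_{t_2 \to \infty} F_p(0,t_2)$ exists and is finite, which is exactly $\int_0^\infty \int_M |df|^2 |f_t|^{p+2} < \infty$. Consequently, $\lim_{t_2\to\infty} F_p(t_1,t_2) = \int_{t_1}^\infty \int_M |df|^2 |f_t|^{p+2}$ exists as a finite number, and as a tail of a convergent integral it tends to zero as $t_1 \to \infty$. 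The exact same reasoning applied to $\int_{t_1}^{t_2}\int_M |\nabla f_t|^2 |f_t|^p$ using the second part of \ref{est 1} gives the remaining half of \ref{est 1'}.

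For \ref{est 2'} the argument is identical, using \ref{est 2} in place of \ref{est 1}: the integrand $e^{2u}|f_t|^{p+2}$ is nonnegative, the full integral over $M \times [0,\infty)$ is finite by Proposition \ref{t_n p}, so iterating the limits $t_2 \to \infty$ and then $t_1 \to \infty$ produces the tail of a convergent improper integral, which vanishes.

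There is no substantive obstacle here, since the corollary is a purely measure-theoretic consequence of Proposition \ref{t_n p}; the only thing to check is that the order of the iterated limits is harmless, which follows from nonnegativity of the integrands (so that all inner integrals over $[t_1, t_2]$ increase in $t_2$ and are dominated by the finite integral over $[0,\infty)$). The proof can thus be given in a few lines by invoking Proposition \ref{t_n p} and the standard fact that $\int_{t_1}^\infty g \to 0$ as $t_1 \to \infty$ whenever $g \geq 0$ and $\int_0^\infty g < \infty$.
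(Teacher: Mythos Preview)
Your argument is correct and in fact more direct than the paper's. You simply observe that each of the three integrands is nonnegative and that, by Proposition~\ref{t_n p}, the corresponding improper integrals over $M\times[0,\infty)$ are finite; hence the iterated limits in $(A_p')$ and $(B_p')$ are just tails of convergent integrals and vanish. This is a clean measure-theoretic consequence of Proposition~\ref{t_n p} and requires nothing further.

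The paper instead reproves the corollary by rerunning the inductive scheme used for Proposition~\ref{t_n p}: it shows $A_p'\Rightarrow B_{p+1}'$ via Lemma~\ref{higher est}, then $B_p'\Rightarrow A_p'$ from \eqref{f_t p+2 full} together with $(C_p)$, and finally checks the base case $B_0'$ from Lemma~\ref{volume}. This route is self-contained in the sense that it does not appeal to the full strength of $(A_p)$ and $(B_p)$ for all $p$ simultaneously, but once Proposition~\ref{t_n p} is in hand your tail argument is strictly shorter and avoids re-invoking the differential inequalities. Either approach is valid; yours is the more economical.
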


\begin{proof}
As above, $A_p'$ implies $B_{p+1}'$.
From \eqref{f_t p+2 full}, together with $C_p$, we conclude that $B_p'$ implies $A_p'$.
Finally, the initial step $B_0'$ is achieved from \Cref{volume}.
\end{proof}

In fact, by taking a time sequence, we can show a better control than $C_p$.
\begin{lemma}
Let $(f,u)$ be a smooth solution of \eqref{eq2} on $M \times [0,\infty)$.
For any $p \geq 0$, there exists $t_n \to \infty$ depending on $p$ such that
\begin{equation}\label{t_n}
\lim_{n \to \infty}  t_n \int_{M} e^{2u}|f_t|^{p+2} (t_n)= 0. \tag{$C_p'$}
\end{equation}
\end{lemma}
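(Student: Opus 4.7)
The plan is to deduce $(C_p')$ directly from estimate $(B_p)$ of \Cref{t_n p}, which gives that the nonnegative function
\[
g_p(t) := \int_M e^{2u}|f_t|^{p+2}(t)
\]
lies in $L^1([0,\infty))$. The content of $(C_p')$ is then a purely measure-theoretic extraction: any nonnegative integrable function on $[0,\infty)$ admits a sequence $t_n \to \infty$ along which $t_n g_p(t_n) \to 0$.

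I would carry this out by a dyadic decomposition. Set $I_n := \int_{2^n}^{2^{n+1}} g_p(t)\, dt$. Integrability of $g_p$ gives $\sum_n I_n < \infty$, so $I_n \to 0$. By averaging, for each $n$ there exists $t_n \in [2^n, 2^{n+1}]$ with $g_p(t_n) \leq I_n / 2^n$, and consequently
\[
t_n\, g_p(t_n) \leq 2^{n+1} \cdot \frac{I_n}{2^n} = 2 I_n \longrightarrow 0,
\]
producing the required sequence $t_n \to \infty$.

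Equivalently, and perhaps more transparently, one may argue by contradiction: if $\liminf_{t \to \infty} t\, g_p(t) \geq 2\ep > 0$, then $g_p(t) \geq \ep/t$ for all $t$ sufficiently large, which forces $\int^{\infty} g_p = +\infty$ and contradicts $(B_p)$. Hence the $\liminf$ vanishes, and realising it along a sequence gives $(C_p')$.

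There is no serious obstacle to this argument: the substantive work was already done in establishing $(B_p)$ in \Cref{t_n p}. The only point worth recording is that the extracted sequence $t_n$ depends on $p$, since the averaging is performed on the specific function $g_p$; this dependence is anyway permitted by the statement.
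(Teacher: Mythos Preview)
Your proposal is correct, and your contradiction argument is exactly the paper's proof: the paper assumes that no such sequence exists, deduces $t\,g_p(t) \geq C_0$ for all large $t$, and obtains a contradiction from $\int g_p \geq C_0 \int \frac{dt}{t} = \infty$ against $(B_p)$.

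Your dyadic argument is a mildly different, constructive variant of the same idea. The paper's contradiction route is shorter, but your averaging on $[2^n,2^{n+1}]$ has the minor advantage of actually producing the sequence $t_n$ rather than inferring its existence from a vanishing $\liminf$; both rest entirely on $(B_p)$ and the elementary fact that $\frac{1}{t} \notin L^1([1,\infty))$.
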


\begin{proof}
Suppose not.
Then there is $C_0>0$ and $t_0$ such that for all $t \geq t_0$,
\[
t \int_{M} e^{2u}|f_t|^{p+2}(t) \geq C_0.
\]
But then, for any $t_0 < t_1 < t_2$, by $B_p$,
\[
C \geq \int_{t_1}^{t_2} \int_{M} e^{2u}|f_t|^{p+2} \geq C_0 \int_{t_1}^{t_2} \frac{1}{t} = C_0 (\log t_2 - \log t_1) \to \infty
\]
if we take $t_2 \to \infty$.
This is a contradiction, hence proves $C_p'$.
\end{proof}

\section{Small energy case - Proof of \Cref{main2}}
\label{sec5}

In this section we consider the case when $E(0) \leq \ep_2$ where $\ep_2>0$ will be determined later.
Since the energy is decreasing, we have $E(t) \leq \ep_2$ for all $t$.

We first state several useful lemmas, most of them are similar to \cite{P23}.
Fix $B_{2r}$ and $\varphi$ be a cut-off function supported on $B_{2r}$ such that $\varphi \equiv 1$ on $B_r$, $0 \leq \varphi \leq 1$ and $|\nabla \varphi| \leq \frac{4}{r}$.

\begin{lemma}\label{e pu}
[Park \cite{P23}, Lemma 21.]
Let $(f,u)$ be a smooth solution of \eqref{eq2} on $M \times [0,\infty)$.
For any $s<t$, $p>2$ and for any $q>0$,
\begin{align}
\int_{B_{2r}} e^{pu} \varphi^q (t) \leq& \int_{B_{2r}} e^{pu} \varphi^q (s) + \frac{2b^2(p-2)}{pa} \int_{s}^{t} \int_{B_{2r}} |df|^p \varphi^q, \label{e pu local} \\
\int_{M} e^{pu}(t) \leq& \int_{M} e^{pu}(s) + \frac{2b^2(p-2)}{pa} \int_{s}^{t} \int_{M} |df|^p. \label{e pu global}
\end{align}
\end{lemma}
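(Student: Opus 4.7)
The plan is to derive the estimate as the integrated form of a differential inequality for
\[
\Phi(t) := \int_{B_{2r}} e^{pu(x,t)}\,\varphi^q(x)\, dx.
\]
Since $\varphi$ is independent of $t$, I can differentiate under the integral sign and substitute the $u$-equation from \eqref{eq2} to obtain
\[
\Phi'(t) = p \int_{B_{2r}} e^{pu}\, u_t\, \varphi^q = pb \int_{B_{2r}} e^{(p-2)u} |df|^2 \varphi^q \;-\; pa\,\Phi(t).
\]
The crucial feature of this identity is the presence of the term $-pa\,\Phi(t)$ on the right, which will be used to absorb any contribution of the form $\int e^{pu}\varphi^q$ coming from the first term.

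To split off such a contribution I would apply Young's inequality with the conjugate exponents $\tfrac{p}{p-2}$ and $\tfrac{p}{2}$ (legitimate because $p>2$), with an adjustable parameter $\lambda>0$:
\[
e^{(p-2)u}|df|^2 \leq \frac{(p-2)\lambda}{p}\, e^{pu} + \frac{2\,\lambda^{-(p-2)/2}}{p}\,|df|^p.
\]
Multiplying by $pb$ and choosing $\lambda = \tfrac{pa}{b(p-2)}$ makes the coefficient of $\int e^{pu}\varphi^q$ exactly $pa$, which cancels against $-pa\,\Phi(t)$ and leaves a clean pointwise (in $t$) differential inequality
\[
\Phi'(t) \leq C(p,a,b)\int_{B_{2r}}|df|^p\varphi^q,
\]
where $C(p,a,b) = 2b\,\lambda^{-(p-2)/2}$ simplifies to the stated $\tfrac{2b^2(p-2)}{pa}$ after substitution. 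Integrating in $t$ from $s$ to $t$ produces \eqref{e pu local}.

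For the global version \eqref{e pu global} the same computation goes through verbatim with $\varphi\equiv 1$ on $M$; no cut-off and hence no boundary contributions appear, because the entire argument only differentiates in the time variable. I do not anticipate a serious obstacle: smoothness of $(f,u)$ justifies differentiating under the integral, and the remaining work is the bookkeeping needed to sharpen the Young's inequality constant so it matches the coefficient in the statement.
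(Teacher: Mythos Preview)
The paper does not supply its own proof of this lemma; it simply quotes the result from \cite{P23}. Your argument---differentiate $\Phi(t)=\int e^{pu}\varphi^q$, insert $u_t=be^{-2u}|df|^2-a$, then use Young's inequality with exponents $\tfrac{p}{p-2}$ and $\tfrac{p}{2}$ to absorb the $e^{pu}$ contribution into the $-pa\,\Phi$ term---is the natural one and is correct.

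There is one slip. With your choice $\lambda=\tfrac{pa}{b(p-2)}$, the resulting constant is
\[
C(p,a,b)=2b\,\lambda^{-(p-2)/2}=2b\left(\frac{b(p-2)}{pa}\right)^{(p-2)/2},
\]
which equals the stated $\tfrac{2b^2(p-2)}{pa}$ only when $p=4$; for general $p>2$ the two expressions differ. So the claimed simplification at the end of your sketch is not valid. This is harmless for the paper's purposes, since every subsequent use of the lemma (Propositions~\ref{W22}, \ref{W23}, \ref{W22-local}) invokes it only with an unspecified constant $C$, but you should not assert that your constant reduces to the one printed in the statement.
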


\begin{lemma} \label{Sobolev}
Let $f$ be a smooth function on $M$.
Then for any $p > 2$,
\begin{equation}
\left( \int_{B_{2r}} |df|^p  \varphi^{\frac{2p}{3}} \right)^{\frac{3}{2p}} \leq C  \left( \int_{B_{2r}} |df|^2 \right)^{\frac{1}{4}} \left( \int_{B_{2r}} |\nabla^2 (f \varphi) |^{\frac{4p}{p+6}} \right)^{\frac{p+6}{4p}}
\end{equation}
where $C$ only depends on $r$ and $p$, and
\begin{equation}
\left( \int_{M} |df|^p \right)^{\frac{3}{2p}} \leq C \left( \int_{M} |df|^2 \right)^{\frac{1}{4}} \left( \int_{M} |\nabla df|^{\frac{4p}{p+6}} \right)^{\frac{p+6}{4p}}
\end{equation}
where $C$ only depends on $M$ and $p$.
\end{lemma}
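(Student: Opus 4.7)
The plan is to recognize both inequalities as instances of the two-dimensional Gagliardo--Nirenberg interpolation. Setting $q=\frac{4p}{p+6}$, one verifies $\frac{1}{p}=\frac{1}{3}\cdot\frac{1}{2}+\frac{2}{3}(\frac{1}{q}-\frac{1}{2})$, so the exponents on the right-hand side match the standard 2D GN inequality
\[
\|v\|_{L^p}\;\le\;C\,\|v\|_{L^2}^{1/3}\,\|\nabla v\|_{L^q}^{2/3}
\]
with interpolation parameter $\alpha=1/3$. Raising this to the $3/2$ power, and observing that $\frac{p+6}{4p}=\frac{1}{q}$ while $\frac{1}{4}=\frac{1}{2}\cdot\frac{1}{2}$, puts the estimate in precisely the stated form.

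For the global inequality on the closed $2$-manifold $M$, I would apply 2D GN directly with $v=df$, worked componentwise after the embedding $N\hookrightarrow\R^L$ (or to $|df|$ via Kato's inequality $|\nabla|df||\le|\nabla^2 f|$). This immediately yields the second displayed inequality.

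For the local inequality, the first move is a H\"older split that separates off the $L^2$ factor. Writing
\[
|df|^p\,\varphi^{2p/3}\;=\;|df|^{p/3}\cdot\bigl(|df|\,\varphi\bigr)^{2p/3}
\]
and applying H\"older with conjugate exponents $6/p$ and $6/(6-p)$ (valid for $2<p<6$; the larger $p$ regime can be handled with a Morrey embedding in place of Sobolev) yields
\[
\Bigl(\int_{B_{2r}}|df|^p\varphi^{2p/3}\Bigr)^{\!3/(2p)} \;\le\;\Bigl(\int_{B_{2r}}|df|^2\Bigr)^{\!1/4}\,\bigl\|df\,\varphi\bigr\|_{L^{4p/(6-p)}}.
\]
I would then control the remaining factor via the Sobolev embedding $W^{1,q}(\R^2)\hookrightarrow L^{4p/(6-p)}(\R^2)$ applied to the compactly supported form $d(f\varphi)$, coupled with the Leibniz identity $df\,\varphi=d(f\varphi)-f\,d\varphi$. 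This produces $\|d(f\varphi)\|_{L^{4p/(6-p)}}\le C\|\nabla^2(f\varphi)\|_{L^q}$ on one hand, and a cross term $\|f\,d\varphi\|_{L^{4p/(6-p)}}$ on the other.

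The main technical point is absorbing this Leibniz cross term so that it appears multiplicatively on the right-hand side rather than additively. The cleanest route is to replace $f$ by $f-c$ for a suitable constant $c$ (e.g.\ the mean of $f$ over $B_{2r}$) and invoke the Poincar\'e inequality, using that $df$ is unchanged and the modification of $\nabla^2(f\varphi)$ is the bounded quantity $c\nabla^2\varphi$; alternatively, the cross term is dominated by $\|f\|_{L^\infty}\|d\varphi\|_{L^{4p/(6-p)}}\le C(r,p)$ using that $f$ maps into the compact target $N$, which is consistent with $C$ in the lemma being allowed to depend on $r$. Apart from this cutoff bookkeeping, the proof is a direct application of standard 2D Gagliardo--Nirenberg and Sobolev embedding.
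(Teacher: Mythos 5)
The paper offers no proof of this lemma at all---it is stated as a standard interpolation fact and used as a black box---so there is no ``paper's route'' to compare against; your proposal has to stand on its own. Your identification of the global inequality as the two-dimensional Gagliardo--Nirenberg inequality with $q=\frac{4p}{p+6}$ is correct, and the exponent arithmetic checks out: $\|v\|_{L^p}\le C\|v\|_{L^2}^{1/3}\|\nabla v\|_{L^q}^{2/3}$ raised to the power $3/2$ is exactly the second display. (Two small caveats there: the gradient factor carries exponent $2/3$, not $1/3$; and on a closed surface the purely multiplicative form needs a word about why no additive lower-order term is required---for $v=df$ exact this is harmless but should be said.)

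The genuine gap is in the local inequality, and it sits precisely at the exponents the paper actually uses. Your H\"older split with conjugate exponents $6/p$ and $6/(6-p)$, followed by the embedding $W^{1,q}\hookrightarrow L^{4p/(6-p)}$, is valid only for $2<p<6$: at $p=6$ the H\"older exponent degenerates to $\infty$ and the required embedding becomes $W^{2,2}(\R^2)\hookrightarrow L^\infty$, which fails; for $p>6$ the pairing breaks down entirely. But the paper invokes the local version exactly at $p=6$ (in the proof of Proposition \ref{W22-local}, to bound $\int_{B_{2r}}|df|^6\varphi^4$) and the global version at $p=6$ and $p=18$. Your parenthetical ``the larger $p$ regime can be handled with a Morrey embedding'' is not a proof: for $p>6$ one has $q>2$ and $W^{1,q}\hookrightarrow C^0$, but it is then no longer clear how to recover the specific multiplicative structure $\bigl(\int|df|^2\bigr)^{1/4}\|\nabla^2(f\varphi)\|_{L^q}$ with the weight $\varphi^{2p/3}$, and you do not carry this out. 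The uniform fix is to run the same Gagliardo--Nirenberg argument you use globally on a compactly supported quantity such as $|d(f\varphi)|$ (or $|df|\,\eta^2$ with $\varphi=\eta^3$, so that differentiating the weight causes no blow-up), absorbing the cutoff commutator terms via the normalization $\int_{B_{2r}}f=0$ and Poincar\'e---the same normalization the paper itself adopts in Proposition \ref{W22-local}. Your treatment of the Leibniz cross term is the right idea; it is the restriction $p<6$ in the main split that must be removed.
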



The following lemma is a version of Gronwall's inequality.
\begin{lemma} \label{Gronwall}
(Gronwall's inequality)
Let $\xi(t)$ be a nonnegative integrable function on $[t_0,t_1]$ which satisfies
\begin{equation} \label{Gronwall int}
\xi(t) \leq C_1(t) \int_{t_0}^{t} \xi(s)ds + C_2(t)
\end{equation}
for integrable functions $C_1(t), C_2(t)$ with $C_1(t) \geq 0$ for all $t \in [t_0,t_1]$.
Then
\begin{equation} \label{Gronwall int2}
\xi(t) \leq C_2(t) + C_1(t) e^{\int_{t_0}^{t} C_1(s) ds} \int_{t_0}^{t} C_2(s) ds.
\end{equation}
\end{lemma}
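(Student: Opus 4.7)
This is the standard Gronwall inequality in a slightly unusual form (with the coefficient $C_1(t)$ factored outside the integral of $\xi$). The plan is to reduce it to a linear first-order differential inequality in the antiderivative of $\xi$ and then solve by the integrating factor method.

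Set $\eta(t) := \int_{t_0}^{t} \xi(s)\,ds$. Since $\xi$ is nonnegative and Lebesgue integrable, $\eta$ is absolutely continuous on $[t_0,t_1]$ with $\eta(t_0)=0$ and $\eta'(t)=\xi(t)$ for almost every $t$. The hypothesis \eqref{Gronwall int} therefore becomes the differential inequality $\eta'(t) - C_1(t)\eta(t) \leq C_2(t)$ a.e. on $[t_0,t_1]$. Multiplying by the integrating factor $\exp\bigl(-\int_{t_0}^{t} C_1(s)\,ds\bigr)$ turns the left-hand side into an exact derivative, after which integrating from $t_0$ to $t$ (using $\eta(t_0)=0$) gives
\[\eta(t) \leq e^{\int_{t_0}^{t} C_1(s)\,ds}\int_{t_0}^{t} C_2(s)\,e^{-\int_{t_0}^{s} C_1(\sigma)\,d\sigma}\,ds \leq e^{\int_{t_0}^{t} C_1(s)\,ds}\int_{t_0}^{t} C_2(s)\,ds,\]
where the last step uses $C_1 \geq 0$ to bound the inner exponential factor by $1$. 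Plugging this estimate back into \eqref{Gronwall int} yields
\[\xi(t) \leq C_1(t)\eta(t) + C_2(t) \leq C_2(t) + C_1(t)\,e^{\int_{t_0}^{t} C_1(s)\,ds}\int_{t_0}^{t} C_2(s)\,ds,\]
which is exactly \eqref{Gronwall int2}.

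There is no real mathematical obstacle; the argument is a classical two-line application of the integrating factor method. The only technicality worth flagging is that $\xi$ is only assumed to be integrable, not continuous, so one has to justify $\eta'=\xi$ a.e.\ via the Lebesgue differentiation theorem (absolute continuity of indefinite Lebesgue integrals). The nonnegativity assumption $C_1 \geq 0$ is used essentially to discard the inner exponential and arrive at the clean form of \eqref{Gronwall int2}; without it the intermediate bound above would still hold but the final statement would have to be rewritten with that inner factor kept.
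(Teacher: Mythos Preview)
The paper states this lemma without proof, treating it as a standard fact, so there is nothing to compare your argument against. Your integrating-factor reduction via $\eta(t)=\int_{t_0}^{t}\xi$ is the canonical proof and is carried out correctly up to the penultimate inequality.

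One genuine gap: when you pass from
\[
\int_{t_0}^{t} C_2(s)\,e^{-\int_{t_0}^{s} C_1(\sigma)\,d\sigma}\,ds
\quad\text{to}\quad
\int_{t_0}^{t} C_2(s)\,ds,
\]
bounding the exponential by $1$ is not enough; you also need $C_2(s)\geq 0$, which the lemma as stated does \emph{not} assume. Without that hypothesis the stated conclusion is actually false: take $t_0=0$, $C_1\equiv 1$, $\xi=e^{t}$ on $[0,1]$ and $\xi=0$ on $(1,2]$, with $C_2=1$ on $[0,1]$ and $C_2=1-e$ on $(1,2]$; then the hypothesis holds with equality everywhere, but at $t=2$ the right-hand side of the claimed bound is $(1-e)+e^{2}(2-e)<0$. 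So the defect is in the lemma's statement rather than in your method. In every application the paper makes of this lemma (\Cref{W22}, \Cref{W23}, \Cref{W22-local}) the function $C_2$ is manifestly nonnegative, so your proof goes through in those cases; just add $C_2\geq 0$ to the hypotheses, or keep the sharper intermediate bound with the inner exponential retained.
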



Now we show the following $W^{2,2}$ estimate.

\begin{prop} \label{W22}
There exists $\ep_2>0$ such that the following holds.
Let $(f,u)$ be a smooth solution of \eqref{eq2} on $M \times [0,\infty)$ with $E(0)\leq \ep_2$.
Then
\begin{equation}
\lim_{t \to \infty} \int_{M} |\nabla df|^2 (t) = 0.
\end{equation}
\end{prop}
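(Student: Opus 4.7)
The plan is to derive an a priori bound
$$\int_M |\nabla df|^2(t) \leq 2 \int_M e^{4u}|f_t|^2(t) + C(E(t)+E(t)^2)$$
(using smallness of $E(0)$ to absorb a quartic gradient term), and then to argue that both pieces on the right-hand side tend to zero as $t\to\infty$ by combining the decay estimates of \Cref{t_n p,t_n zero} with the energy gap of \Cref{energy gap}.

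Step 1 (Bochner-type bound). From the first equation in \eqref{eq2} we have $\Delta f = e^{2u}f_t - A(f)(df,df)$. Since $f_t\in T_f N$ and $A(f)(df,df)\perp T_f N$ in $\R^L$, the cross-term vanishes and
$$|\Delta f|^2 = e^{4u}|f_t|^2 + |A(f)(df,df)|^2 \leq e^{4u}|f_t|^2 + C_N^2 |df|^4.$$
Integrating by parts on the closed Riemann surface $M$ gives the identity $\int_M|\nabla df|^2 = \int_M |\Delta f|^2 - \int_M \mathrm{Ric}_M(df,df)$ (modulo lower-order terms coming from the embedding of $N$, all controlled by $C_N$). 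Hence
$$\int_M |\nabla df|^2 \leq \int_M e^{4u}|f_t|^2 + C_N^2 \int_M |df|^4 + C(M)\, E(t).$$

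Step 2 (Absorption via Sobolev). The two-dimensional Ladyzhenskaya/Sobolev interpolation (the $p\to 4$ case of \Cref{Sobolev}) gives
$$\int_M |df|^4 \leq C\,E(t)\!\left(E(t)+\int_M |\nabla df|^2\right).$$
Plugging this into Step 1 and using $E(t)\leq E(0)\leq \ep_2$, we choose $\ep_2$ small enough so that $C\,C_N^2\,\ep_2 \leq 1/2$ and absorb $\tfrac12\int_M|\nabla df|^2$ onto the left, obtaining
$$\int_M |\nabla df|^2(t) \leq 2\int_M e^{4u}|f_t|^2(t) + C\bigl(E(t)+E(t)^2\bigr).$$

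Step 3 (The two pieces decay). For $\int_M e^{4u}|f_t|^2 \to 0$ we need to convert the $e^{2u}$-weighted decay from \Cref{t_n p} (which gives $\int_M e^{2u}|f_t|^{p+2}(t)\to 0$ for every $p\geq0$) into the heavier $e^{4u}$ weight. Using Cauchy-Schwarz,
$$\int_M e^{4u}|f_t|^2 \leq \Bigl(\int_M e^{4u}\Bigr)^{\!1/2}\!\Bigl(\int_M e^{4u}|f_t|^4\Bigr)^{\!1/2},$$
and $\int_M e^{4u}|f_t|^4 \leq (\int_M e^{6u})^{1/3}(\int_M e^{2u}|f_t|^{6})^{2/3}\to 0$ provided $\int_M e^{pu}$ is uniformly bounded in $t$ for $p=4,6$. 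The latter follows from \Cref{e pu} together with the 2D Sobolev inequality (bounding $\int_M|df|^p$ by powers of $E(t)$ and $\int_M|\nabla df|^2$) and a Gronwall argument (\Cref{Gronwall}) in which smallness of $\ep_2$ closes the loop. For $E(t)\to 0$: the energy is monotone so $E(t)\downarrow E_\infty\geq 0$; if $E_\infty>0$, pick $t_n\to\infty$ along which $\int_M e^{4u}|f_t|^2(t_n)\to0$ so that $f(t_n)$ is bounded in $W^{2,2}$ and approximately harmonic in $L^2(g_0)$. A weak $W^{2,2}$ limit $f_\infty$ is then a harmonic map with $E(f_\infty)\leq\ep_2<\ep_0$, forcing $f_\infty$ to be constant by the energy gap \Cref{energy gap}; this contradicts $E_\infty>0$, so $E_\infty=0$.

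The main obstacle is Step 3, specifically the passage from the $e^{2u}$-weighted decay of $f_t$ delivered by \Cref{t_n p} to the $e^{4u}$-weighted quantity $\int_M e^{4u}|f_t|^2$ that arises naturally from the tension-field/Bochner argument. This requires uniform-in-time control on $\int_M e^{pu}$ for $p\geq 4$, which in turn requires the $|df|^p$ bounds that only small energy (plus Sobolev and Gronwall) can provide. Coupling the $f$- and $u$-evolutions tightly enough to close this loop is the genuinely delicate point of the argument.
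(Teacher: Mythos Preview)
Your Steps~1--2 line up exactly with the paper's opening: from $\Delta f = e^{2u}f_t - A(df,df)$ one bounds $\int_M|\Delta f|^2$ by $\int_M e^{4u}|f_t|^2 + C_N\int_M|df|^4$, interpolates $\int_M|df|^4\le C\,E(t)\int_M|\nabla df|^2$, and absorbs using smallness of $\ep_2$. The divergence comes in Step~3, and your own closing sentence already names it: converting $e^{2u}$--weighted decay of $f_t$ into control of $\int_M e^{4u}|f_t|^2$ via a \emph{uniform} bound on $\int_M e^{pu}$ is exactly what does not go through as stated. From \Cref{e pu} one only gets
\[
\int_M e^{6u}(t)\ \le\ |M| + C\int_0^t\!\!\int_M|df|^6,
\]
and after feeding in \Cref{Sobolev} and your Step~2 bound this produces at best linear growth of $\int_M e^{6u}$ in $t$ (the additive $C\,E(t)$ term survives the Gronwall), so the product $(\int_M e^{6u})^{1/2}(\int_M e^{2u}|f_t|^4)^{1/2}$ need not tend to~$0$ for all $t$. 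Your energy--gap detour to force $E(t)\to0$ is then circular, since establishing the $W^{2,2}$ compactness needed for that limit already presupposes the very control of $\int_M e^{4u}|f_t|^2$ you are after.

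The paper avoids this trap by \emph{not} trying to bound $\int_M e^{pu}$ uniformly. Instead it writes
\[
\int_M e^{4u}|f_t|^2 \ \le\ \Bigl(\int_M e^{2u}|f_t|^4\Bigr)^{1/2}\Bigl(\int_M e^{6u}\Bigr)^{1/2},
\]
substitutes $\int_M e^{6u}(t)\le |M|+C\,E(0)\int_0^t X(s)^2\,ds$ (with $X(t)=\int_M|\nabla df|^2$) from \Cref{e pu} plus \Cref{Sobolev}, and arrives at a single integral inequality
\[
X(t)^2 \ \le\ C_1(t)\int_0^t X(s)^2\,ds + C_2(t),
\]
where \emph{both} $C_1(t)$ and $C_2(t)$ are constant multiples of $W(t):=\int_M e^{2u}|f_t|^4(t)$. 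Since $\int_0^\infty W<\infty$ (estimate $B_2$) and $W(t)\to0$ (estimate $C_2$) by \Cref{t_n p}, \Cref{Gronwall} gives $X(t)^2\le C\,W(t)\to0$ directly, with no separate appeal to boundedness of $\int_M e^{pu}$ and no need for $E(t)\to0$. So the ``delicate loop'' you flag is closed not by decoupling the two pieces but by packaging the $\int_M e^{6u}$ growth \emph{inside} the Gronwall for $X^2$.
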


\begin{proof}
From the equation $\Delta f + A(df,df) = e^{2u}f_t$, we get
\[
\int_{M} |\Delta f|^2(t) \leq C_N \int_{M} |df|^4(t) + C \int_{M} e^{4u}|f_t|^2(t).
\]
By interpolation,
\[
\int_{M} |df|^4(t) \leq C \int_{M} |df|^2(t) \int_{M} |\nabla df|^2(t) \leq C E(t) \int_{M} |\nabla df|^2(t).
\]
On the other hand, by \Cref{e pu} with $s<t$,
\[
\begin{split}
\int_{M} e^{4u}|f_t|^2(t) \leq& \left( \int_{M} e^{2u}|f_t|^4(t) \right)^{1/2} \left( \int_{M} e^{6u}(t) \right)^{1/2}\\
\leq& \left( \int_{M} e^{2u}|f_t|^4(t) \right)^{1/2} \left( \int_{M} e^{6u}(s) + C \int_{s}^{t} \int_{M} |df|^6  \right)^{1/2}.
\end{split}
\]
From \Cref{Sobolev} with $p=6$, we have
\[
\int_{M} |df|^6(t) \leq C \left( \int_{M} |df|^2(t) \right) \left( \int_{M} |\nabla df|^2(t) \right)^2.
\]
By $L^2$ estimate with above inequalities, we get
\[
\begin{split}
(1 - C C_N^2 E(t)^2)  \left( \int_{M} |\nabla df|^2(t) \right)^2 &\leq  C \int_{M} e^{2u}|f_t|^4(t) \\
& \cdot \left( \int_{M} e^{6u}(s)  +  E(0) \int_{s}^{t}  \left( \int_{M} |\nabla df|^2 \right)^2  \right).
\end{split}
\]

Now choose $\ep_2$ such that $1 - C C_N^2 \ep_2^2 > \frac{1}{2}$, then for all $s < t$, we can apply \Cref{Gronwall}.
Choose $s=0$ and denote
\[
\begin{split}
X(t) =& \left( \int_{M} |\nabla df|^2(t) \right)^2\\
C_1(t) =& 2C E(0) \int_{M} e^{2u}|f_t|^4\\
C_2(t) =& 2C |M| \int_{M} e^{2u}|f_t|^4.
\end{split}
\]
Then the above inequality becomes
\[
X(t) \leq C_1(t) \int_{0}^{t} X(s)ds + C_2(t)
\]
for all $t \geq 0$, hence by \Cref{Gronwall} and \eqref{est 2} with $p=2$,
\begin{equation} \label{W22 der}
\begin{split}
X(t) \leq& C_2(t) + C_1(t) e^{\int_{0}^{t} C_1(s)ds} \int_{0}^{t} C_2(s)ds\\
=& C(|M|,E(0)) \int_{M} e^{2u}|f_t|^4(t) \to 0
\end{split}
\end{equation}
as $t \to \infty$ by \eqref{est 3} with $p=2$.
This completes the proof.
\end{proof}

Note that from \eqref{W22 der} we also have
\begin{equation} \label{W22 int}
\int_{0}^{t} \left( \int_{M} |\nabla df|^2(t) \right)^2 \leq C(|M|,E(0)) \int_{0}^{t} \int_{M} e^{2u}|f_t|^4 < \infty
\end{equation}
for any $t \geq 0$ by \eqref{est 2} with $p=2$.
By Sobolev embedding, we can therefore obtain
\begin{equation} \label{df p}
\lim_{t \to \infty} \int_{M} |df|^p(t) = 0
\end{equation}
for any $p>1$.

Next, we derive the following $W^{2,3}$ convergence.

\begin{prop} \label{W23}
Under the same assumption of \Cref{W22},
\begin{equation}
\lim_{t \to \infty} \int_{M} |\nabla df|^3 (t) = 0.
\end{equation}
\end{prop}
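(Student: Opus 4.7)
The plan parallels the proof of \Cref{W22}. The embedded equation gives $\Delta f = e^{2u}f_t - A(df,df)$, and standard $L^3$ elliptic regularity on the closed surface $(M,g_0)$ yields
\[
\int_M |\nabla df|^3(t) \,\le\, C\int_M |\Delta f|^3(t) + C\int_M |df|^3(t) \,\le\, C\int_M |df|^6(t) + C\int_M e^{6u}|f_t|^3(t) + C\int_M |df|^3(t).
\]
The last term tends to $0$ directly by \eqref{df p}, so only the first two need attention.

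For $\int_M |df|^6(t)$, the Sobolev lemma with $p=6$ gives $\int_M |df|^6 \le C\,E(t)\bigl(\int_M |\nabla df|^2\bigr)^{2}$, which tends to $0$ by \Cref{W22}. For $\int_M e^{6u}|f_t|^3(t)$, Hölder's inequality with conjugate exponents $(4/3,4)$ gives
\[
\int_M e^{6u}|f_t|^3 \,\le\, \Bigl(\int_M e^{2u}|f_t|^4\Bigr)^{3/4}\Bigl(\int_M e^{18u}\Bigr)^{1/4},
\]
and the first factor tends to $0$ by \eqref{est 3} with $p=2$. Hence the whole problem reduces to establishing a uniform-in-$t$ bound on $\int_M e^{18u}(t)$.

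For that last bound, I would apply \Cref{e pu} with $p=18$, reducing the question to $\int_0^\infty\!\int_M |df|^{18}<\infty$, and then use the Sobolev lemma with $p=18$, which yields $\int_M |df|^{18} \le C\,E(t)^{3}\bigl(\int_M |\nabla df|^3\bigr)^{4}$. Setting $X(t):=\int_M |\nabla df|^3(t)$, this feeds back into a nonlinear Gronwall-type loop for $X$ whose driving terms are integrable in time thanks to \eqref{W22 int} and \eqref{est 2} with $p=2$. The principal obstacle is closing this loop cleanly: one shrinks $\ep_2$ so that the factor $E(t)^{3}\le\ep_2^{3}$ absorbs the nonlinear $X^{4}$-contribution, in the same spirit as the choice of $\ep_2$ in the proof of \Cref{W22}, after which \Cref{Gronwall} delivers simultaneously a uniform bound on $\int_M e^{18u}$ and the desired decay $X(t)\to 0$. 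Substituting this back into the Hölder splitting and combining with the first two terms completes the argument.
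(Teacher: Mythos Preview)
Your proposal follows the same route as the paper: the $L^3$ elliptic estimate, the H{\"o}lder splitting of $\int_M e^{6u}|f_t|^3$ with exponents $(4/3,4)$, control of $\int_M e^{18u}$ via \Cref{e pu} with $p=18$, the Sobolev bound $\int_M |df|^{18}\le C E(t)^3\bigl(\int_M|\nabla df|^3\bigr)^4$, and a Gronwall argument to close.

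The one place your description diverges from the paper is the closing step. You set $X(t)=\int_M|\nabla df|^3$ and speak of a \emph{nonlinear} Gronwall loop, proposing to shrink $\ep_2$ so that $E(t)^3\le\ep_2^3$ ``absorbs the nonlinear $X^4$-contribution.'' That is not how the argument actually closes: the $X^4$ term sits inside a time integral $\int_0^t X(s)^4\,ds$, not at the current time, so a pointwise absorption in the style of the choice of $\ep_2$ in \Cref{W22} does not apply to it. The paper instead simply raises the whole inequality to the fourth power, so that with $Y(t):=\bigl(\int_M|\nabla df|^3(t)\bigr)^4$ one obtains a \emph{linear} inequality
\[
Y(t)\;\le\; C_1(t)\int_0^t Y(s)\,ds + C_2(t),
\]
with $C_1(t)=CE(0)^3\bigl(\int_M e^{2u}|f_t|^4\bigr)^3$ and $C_2(t)=C|M|\bigl(\int_M e^{2u}|f_t|^4\bigr)^3+CC_N^4\bigl(\int_M|df|^6\bigr)^4$. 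Since $\sup_t\int_M e^{2u}|f_t|^4<\infty$ and $\sup_t\int_M|df|^6<\infty$ by \eqref{est 3} and \eqref{df p}, one gets $\int_0^\infty C_1<\infty$ from \eqref{est 2} with $p=2$ and $\int_0^\infty C_2<\infty$ from \eqref{W22 int}; then \Cref{Gronwall} yields $Y(t)\le C_2(t)+CC_1(t)\to 0$. No further shrinking of $\ep_2$ is required beyond what was already fixed in \Cref{W22}.
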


\begin{proof}
From the equation $\Delta f + A(df,df) = e^{2u}f_t$, we get
\[
\begin{split}
\int_{M} |\Delta f|^3(t) \leq& C_N \int_{M} |df|^6(t) + C \int_{M} e^{6u}|f_t|^3(t)\\
\leq& C_N \int_{M} |df|^6(t) + C \left( \int_{M} e^{2u}|f_t|^4(t) \right)^{3/4} \left( \int_{M} e^{18u}(t) \right)^{1/4}.
\end{split}
\]
By \Cref{e pu},
\[
\int_{M} e^{18u} (t) \leq \int_{M} e^{18u}(0) + C \int_{0}^{t} \int_{M} |df|^{18}.
\]
By \Cref{Sobolev} with $p=18$, we have
\[
\int_{M} |df|^{18}(t) \leq C \left( \int_{M} |df|^2(t) \right)^3 \left( \int_{M} |\nabla df|^3 \right)^4.
\]
By $L^3$ estimate with above inequalities, we get
\[
\begin{split}
\left( \int_{M} |\nabla df|^3 (t) \right)^4 \leq& C \left( \int_{M} |\Delta f|^3(t) \right)^4\\
\leq& C C_N^4 \left( \int_{M} |df|^6(t) \right)^4\\
& + C \left( \int_{M} e^{2u}|f_t|^4(t) \right)^3 \left( |M| + C E(0)^3 \int_{0}^{t} \left( \int_{M} |\nabla df|^3 \right)^4 \right).
\end{split}
\]
As above, we can apply \Cref{Gronwall} with
\[
\begin{split}
X(t) =& \left( \int_{M} |\nabla df|^3(t) \right)^4\\
C_1(t) =& C E(0)^3 \left( \int_{M} e^{2u}|f_t|^4(t) \right)^3 \\
C_2(t) =& C |M| \left( \int_{M} e^{2u}|f_t|^4 (t) \right)^3 + C C_N^4 \left( \int_{M} |df|^6(t) \right)^4.
\end{split}
\]
Then
\[
X(t) \leq C_1(t) \int_{0}^{t} X(s)ds + C_2(t)
\]
so by \Cref{Gronwall},
\[
X(t) \leq C_2(t) + C_1(t) e^{\int_{0}^{t} C_1(s)ds} \int_{0}^{t} C_2(s)ds.
\]
From \eqref{est 3} with $p=2$, $m_1 = \sup_{[0,t]} \int_{M} e^{2u}|f_t|^4 < \infty$ is independent on $t$.
Then
\[
\int_{0}^{t} C_1(s) ds \leq C E(0)^3 m_1^2 \int_{0}^{t} \int_{M} e^{2u}|f_t|^4  < \infty
\]
by \eqref{est 2} with $p=2$.
Also, from \eqref{df p} with $p=6$, $m_2 = \sup_{[0,t]} \int_{M} |df|^6 < \infty$ is independent on $t$.
Then from \Cref{Sobolev} with $p=6$ and \eqref{W22 int}, we have
\[
\int_{0}^{t} \int_{M} |df|^6 \leq C E(0) \int_{0}^{t} \left( \int_{M} |\nabla df|^2 \right)^2 < \infty
\]
and hence
\[
\int_{0}^{t} C_2(s)ds \leq C|M| m_1^2 C(E(0)) + CC_N^4 m_2^3 \int_{0}^{t} \int_{M} |df|^6 < \infty.
\]
Therefore, we get
\[
X(t) \leq C_2(t) + C C_1(t) \to 0
\]
by \eqref{est 3} with $p=2$ and \eqref{df p}.
This competes the proof.
\end{proof}

By Sobolev embedding, we now have
\begin{equation}
\lim_{t \to \infty} \|df\|_{L^\infty(M)}(t) = 0.
\end{equation}


For $u$, first note that there is $C_0>0$ such that $|df|(x,t) \leq C_0$ for all $x \in M, t \in [0,\infty)$.
Then for any $t_1 < t$,
\[
\begin{split}
0 < e^{2u}(t) \leq& e^{-2a(t-t_1)}e^{2u}(t_1) + 2b e^{-2at} \int_{t_1}^{t} e^{2as} C_0^2 ds\\
 =& e^{-2a(t-t_1)} e^{2u}(t_1) + \frac{b}{a} C_0^2 \left( 1 - e^{-2a(t-t_1)} \right).
\end{split}
\]
This shows $\lim_{t \to \infty} e^{2u}(x,t) = 0$ for all $x \in M$, or $u(x,t) \to -\infty$ for all $x \in M$ as $t \to \infty$.
This completes the proof of \Cref{main2}.

\section{Large energy case - Proof of \Cref{main3}}
\label{sec6}

In this section we deal with the case where $E(0) > \ep_2$.
Note that if $E(T) \leq \ep_2$ for some $T >0$, then we can consider $f(T)$ as the initial map and $T$ be the initial time and apply \Cref{main2} to get the convergence.
So, in this section we essentially consider the case $E(t) > \ep_2$ for all $t$.

We consider a small ball $B_{2r}$ where the energy over the ball is less than $\ep_2$ for all $t \geq T$ for some $T$.
Due to the fact that $e^{2u}$ involves time integral, this assumption for all $t \geq T$ is needed.
Then we get the local $L^2$ bound for the tension field, but due to the extra term from cut-off function, this bound is only under the time sequence $t_n \to \infty$ defined on \eqref{t_n} with $p=2$.
By classical bubbling analysis, we can get a strong $W^{1,2}$ convergence of $f(t_n)$.

Let us describe the situation more precisely.
\begin{definition} \label{bubble points}
We say $x_0 \in M$ be a sequential bubble point (at time infinite) if there exists $s_n \to \infty$ such that
\begin{equation}
\lim_{r \to 0} \lim_{n \to \infty} \int_{B_{r}(x_0)} |df|^2(s_n) > \ep_2.
\end{equation}

We say $x_0 \in M$ be a uniform bubble point (at time infinite) if
\begin{equation}
\lim_{r \to 0} \lim_{t \to \infty} \int_{B_{r}(x_0)} |df|^2(t) > \ep_2.
\end{equation}

We say $x_0 \in M$ be a regular point (at time infinite) if
\begin{equation}
\lim_{t \to \infty} \int_{B_{r}(x_0)} |df|^2(t) \leq \ep_2
\end{equation}
for all $r>0$ small enough.
\end{definition}

Now in each ball centered at a regular point, we will get $W^{1,2}$ convergence of $f(t_n)$.
Since the total energy is finite, the number of uniform bubble point is at most finite.
But we do not know how large the set of sequential bubble points is.
For example, we can consider a bubble moving around $N$ and winding forever within a circular trajectory, in which case all points on the trajectory are sequential bubble points.

The following $W^{2,2}$ convergence is similar to \Cref{W22}.

\begin{prop} \label{W22-local}
Let $(f,u)$ be a solution of \eqref{eq2} and $t_n$ be as in \eqref{t_n} with $p=2$.
There exists $\ep_2>0$ such that if
\[
\int_{B_{2r}} |df|^2 (t) \leq \ep_2,
\]
for all $t \geq T$ for some $T$, then for all $n$ large enough,
\begin{equation}
 \int_{B_{2r}} |\nabla^2 (f \varphi)|^2 (t_n) \leq C_5  \ep_2
\end{equation}
for some constant $C_5$ only depends on $r,u(T)$ and $E(0)$.
\end{prop}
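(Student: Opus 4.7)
The plan is to mimic the argument of \Cref{W22} in a localized form, working with the cut-off product $v := f \varphi$. Starting from the interior $L^2$ elliptic estimate $\int |\nabla^2 v|^2 \leq C \int |\Delta v|^2$ and inserting the equation $\Delta f = e^{2u} f_t - A(df,df)$, one obtains
\[
X(t) := \int_{B_{2r}} |\nabla^2 v|^2(t) \leq C \int \varphi^2 e^{4u}|f_t|^2 + C C_N^2 \int \varphi^2 |df|^4 + (\text{cut-off remainders}).
\]
The remainders of the form $\int |\nabla \varphi|^2 |df|^2$ and $\int |\nabla^2 \varphi|^2 |f|^2$ I would handle by subtracting the mean $\bar{f}$ of $f$ over $B_{2r}$ and applying Poincar\'e; both then scale like $\ep_2 / r^2$ thanks to the hypothesis $\int_{B_{2r}} |df|^2 \leq \ep_2$. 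The $|df|^4$ integral is estimated via \Cref{Sobolev} with $p = 4$ using nested cut-offs, producing $\int \varphi^2 |df|^4 \leq C \sqrt{\ep_2}\, X(t)$, which for $\ep_2$ small is absorbed on the left.

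The central term is $\int \varphi^2 e^{4u}|f_t|^2$. I would split it by Cauchy-Schwarz as $\bigl(\int \varphi^2 e^{2u}|f_t|^4\bigr)^{1/2}\bigl(\int \varphi^2 e^{6u}\bigr)^{1/2}$, then invoke \eqref{e pu local} with $p = 6$ starting from time $T$:
\[
\int \varphi^2 e^{6u}(t) \leq \int \varphi^2 e^{6u}(T) + C \int_T^t \int \varphi^2 |df|^6.
\]
A further nested-cut-off application of \Cref{Sobolev} with $p = 6$, together with $\int_{B_{2r}} |df|^2 \leq \ep_2$, converts the second integral into $C \ep_2 \int_T^t X(s)^2\, ds$. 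Squaring the resulting inequality for $X(t)$ yields the Gronwall setup
\[
X(t)^2 \leq C_1(t) \int_T^t X(s)^2 \, ds + C_2(t),
\]
with $C_1(t) = C \ep_2 \int_M e^{2u}|f_t|^4(t)$ and $C_2(t)$ the sum of cut-off $\ep_2/r^2$-type errors and a product $\int_M e^{2u}|f_t|^4(t) \cdot \int \varphi^2 e^{6u}(T)$. Applying \Cref{Gronwall}, both $\int_T^t C_1$ and the non-error part of $\int_T^t C_2$ are uniformly bounded in $t$ by \eqref{est 2} with $p = 2$.

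The role of the special sequence $t_n$ of \eqref{t_n} with $p = 2$ enters only at the final step: the Gronwall output contains a factor $C_1(t_n)$, which by itself decays to zero along $t_n$, but it is multiplied by a contribution growing at most linearly in $t_n$ from the cut-off errors in $\int_T^{t_n} C_2$. The sharper decay $t_n \int_M e^{2u}|f_t|^4(t_n) \to 0$ forces this product to be $o(1)$. Combining these $o(1)$ pieces with the $\ep_2$-proportional cut-off contribution produces $X(t_n) \leq C_5 \ep_2$ for all $n$ sufficiently large, with $C_5$ depending on $r$, on $\int_{B_{2r}} e^{6u}(T)$, and on the global constants inherited from $E(0)$.

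The main obstacle is the bootstrap implicit in controlling $\int \varphi^2 e^{6u}$: estimating it via \eqref{e pu local} requires an a priori bound on $X$ in order to control $\int \varphi^2 |df|^6 \leq C \ep_2 X^2$, while the bound on $X$ in turn needs $\int e^{6u}$ to be controlled. I would close this circle with a continuity argument on the maximal interval where $X$ stays below a fixed threshold (say $1/\sqrt{\ep_2}$), showing via the Gronwall bound that this threshold is strictly maintained so that the estimate persists up to $t = t_n$. The nested cut-offs needed to match the weights $\varphi^{2p/3}$ of \Cref{Sobolev} are routine but add considerable bookkeeping.
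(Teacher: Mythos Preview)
Your proposal is essentially the same strategy as the paper's proof: localize the $L^2$ elliptic estimate, absorb the $|df|^4$ term using the small-energy hypothesis, split $e^{4u}|f_t|^2$ by Cauchy--Schwarz, feed $\int e^{6u}$ through \eqref{e pu local} and \Cref{Sobolev} with $p=6$, square, apply \Cref{Gronwall}, and finally use $t_n \int_M e^{2u}|f_t|^4(t_n) \to 0$ to kill the linearly-growing factor. Two simplifications are worth noting.

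First, the continuity argument you describe at the end is unnecessary. There is no genuine bootstrap: \Cref{e pu} gives $\int \varphi^q e^{6u}(t) \leq \int \varphi^q e^{6u}(T) + C\int_T^t \int \varphi^q |df|^6$ unconditionally, and substituting $\int \varphi^q |df|^6 \leq C\ep_2 X^2$ yields the integral inequality $X(t)^2 \leq C_1(t)\int_T^t X(s)^2\,ds + C_2(t)$ directly. Since the flow is smooth for all time, $X$ is continuous and locally integrable, so \Cref{Gronwall} applies immediately; no threshold-preservation step is needed.

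Second, the nested cut-offs can be avoided. The paper takes the weight $\varphi^4$ throughout: in \eqref{e pu local} with $p=6$, $q=4$, and in \Cref{Sobolev} with $p=6$ the natural weight is $\varphi^{2p/3}=\varphi^4$, so the exponents line up exactly. Likewise the $|df|^4$ term is handled via the endpoint Sobolev $\|g\|_{L^2} \leq C\|\nabla g\|_{L^1}$ applied to $g=|df|^2\varphi$, giving the cleaner factor $C\ep_2$ rather than $C\sqrt{\ep_2}$.
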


\begin{proof}
From the equation $\Delta f + A(df,df) = e^{2u}f_t$, we get
\[
\begin{split}
\int_{B_{2r}} |\Delta (f \varphi)|^2 (t) \leq& C_N \int_{B_{2r}} |df|^4 \varphi^2 (t) +  \int_{B_{2r}} e^{4u}|f_t|^2 \varphi^2 (t) + C(\varphi) \int_{B_{2r}} (f^2 + |df|^2)(t).
\end{split}
\]
Without loss of generality, we assume $\int_{B_{2r}} f = 0$ and hence $\int_{B_{2r}} f^p \leq C \int_{B_{2r}} |df|^p$ for any $p \geq 1$.
By Sobolev embedding, for all $t \geq T$,
\[
\begin{split}
\int_{B_{2r}} |df|^4 \varphi^2 (t) \leq& C \left( \int_{B_{2r}} |\nabla (|df|^2 \varphi)| (t) \right)^2 \\
\leq& C \left( \int_{B_{2r}} |df| |\nabla df| \varphi (t) + \int_{B_{2r}} |df|^2 |\nabla \varphi| (t) \right)^2\\
\leq& C \int_{B_{2r}} |df|^2(t) \left( \int_{B_{2r}} |\nabla^2 (f \varphi) |^2 (t) \right) \\
\leq&  C \ep_2 \left( \int_{B_{2r}} |\nabla^2 (f \varphi) |^2 (t) \right) .
\end{split}
\]
On the other hand, by \Cref{e pu}
\[
\begin{split}
\int_{B_{2r}} e^{4u}|f_t|^2 \varphi^2 (t) \leq& \left( \int_{B_{2r}} e^{2u}|f_t|^4(t) \right)^{\frac{1}{2}} \left( \int_{B_{2r}} e^{6u}(t) \varphi^4 \right)^{\frac{1}{2}}\\
\leq& \left( \int_{M} e^{2u}|f_t|^4(t) \right)^{\frac{1}{2}} \left( \int_{B_{2r}} e^{6u} \varphi^4 (T) + C \int_{T}^{t} \int_{B_{2r}} |df|^6 \varphi^4  \right)^{\frac{1}{2}}.
\end{split}
\]
From \Cref{Sobolev} with $p=6$, we have
\[
\begin{split}
\int_{B_{2r}} |df|^6 \varphi^4 (t) \leq& C \left( \int_{B_{2r}} |df|^2(t) \right) \left( \int_{B_{2r}} |\nabla^2 (f \varphi) |^2 (t) \right)^2\\
\leq& C E(0) \left( \int_{B_{2r}} |\nabla^2 (f \varphi) |^2 (t) \right)^2.
\end{split}
\]
By $L^2$ estimate with above inequalities, we get for all $t \geq T$,
\[
\begin{split}
\left( \int_{B_{2r}} |\nabla^2 (f \varphi)|^2(t) \right)^2 \leq& C \left( \int_{B_{2r}} |\Delta (f \varphi) |^2 (t) \right)^2 \\
\leq& C C_N^2 \ep_2^2 \left( \int_{B_{2r}} |\nabla^2 (f \varphi)|^2(t) \right)^2 + C(\varphi) \ep_2^2 \\
& + C \int_{M} e^{2u}|f_t|^4(t) \left( \int_{B_{2r}} e^{6u} \varphi^4 (T) + E(0) \int_{T}^{t} \left( \int_{B_{2r}} |\nabla^2 (f \varphi)|^2(s) \right)^2  \right).
\end{split}
\]

If $\ep_2$ is small enough so that $1 - C C_N^2 \ep_2^2 > \frac{1}{2}$, then we can apply \Cref{Gronwall}.
Denote
\[
\begin{split}
X(t) =& \left( \int_{B_{2r}} |\nabla^2 (f \varphi)|^2(t) \right)^2\\
C_1(t) =& 2C E(0) \int_{M} e^{2u}|f_t|^4
\end{split}
\]
and $C_2$ be a constant such that
\[
2C \int_{B_{2r}} e^{6u} \varphi^4 (T)  \int_{M} e^{2u}|f_t|^4 + 2C(\varphi) \ep_2^2 \leq C_2 \ep_2^2.
\]
Then the above inequality becomes
\[
X(t) \leq C_1(t) \int_{T}^{t} X(s)ds + C_2 \ep_2^2
\]
hence by \Cref{Gronwall}, \eqref{est 2} and \eqref{t_n} with $p=2$,
\begin{equation}
\begin{split}
X(t_n) \leq& C_2 \ep_2^2 + C_1(t_n) e^{\int_{T}^{t_n} C_1(s)ds} \int_{0}^{t_n} C_2 \ep_2^2 ds\\
=& C_2 \ep_2^2 + t_n C_1(t_n) C_2 \ep_2^2 e^{\int_{0}^{t_n} C_1(s)ds}\\
\leq& 2 C_2 \ep_2^2
\end{split}
\end{equation}
for all $n$ large enough.
This completes the proof.
\end{proof}

By Sobolev embedding, we can also obtain
\begin{equation} \label{df p-local}
\int_{B_{r}} |df|^p(t_n) \leq C \ep_2
\end{equation}
for any $p>1$ and for all $n$ large enough.

Hence, we get the uniform $L^2$ bound for the tension field
\begin{equation} \label{L2 tension}
\|\tau(f)(t_n)\|_{L^2(B_{r})}^2 = \int_{B_{r}} |\Delta f|^2 (t_n) + C_N \int_{B_{r}} |df|^4 (t_n) \leq C \ep_2
\end{equation}
for all $n$ large enough, provided $\int_{B_{2r}} |df|^2(t) \leq \ep_2$ for all $t \geq T$.

Now we are ready to prove \Cref{main3}.

\begin{proof}[Proof of \Cref{main3}]

For fixed $r>0$, cover $M$ by $2r$-balls where those of half radius also cover $M$ and each point in $M$ is within at most $10$ balls.
Let $t_n \to \infty$ be a sequence of time defined in \eqref{t_n} with $p=2$.
For each time $t=t_n$, there are at most $10 E(0)/\ep_2$ balls with energy greater than $\ep_2$.
After passing subsequence, still denoted by $t_n$, we may assume the center points of these balls converge.
Let $\Omega(r)$ be the union of those limit balls and $K(r) = M \setminus \Omega(r)$.
We claim that $K(r)$ only consists of regular points.
Because if there is a sequential bubble point $x \in K(r)$, by our assumption, $x$ is a uniform bubble point which contradicts to the fact that $\int_{B_{2r}(x)} |df|^2(t_n) \leq \ep_2$ for all $n$ large enough.

Now on every ball $B_{2r} \subset K(r)$, we can apply classical bubbling analysis for sequence of approximate harmonic maps with uniformly bounded tension fields in $L^2$ to obtain strong $W^{1,2}$ convergence of $f(t_n)$ whose limit map $f_{\infty} : K(r) \to N$ is harmonic.
Note that since $\int_{B_{2r}} |df|^2(t) \leq \ep_2$, there is no energy concentration on $B_{2r}$.

Next, consider a sequence $r_i \to 0$ and take diagonal subsequence, still denoted by $f(t_n)$.
Then $\cup_i K(r_i) = M \setminus \{x_1, \cdots, x_k\}$ for at most finitely many points $x_1, \cdots, x_k$.
Note that these points are uniform bubble points.
And we can extend the harmonic map $f_{\infty}$ to whole manifold $M$ by removable singularity.
Then for any compact set $K \subset M \setminus \{x_1, \cdots, x_k\}$, $f_n$ converges to $f_{\infty}$ in $W^{1,2}$ on $K$.
This completes the proof of \Cref{main3},
\end{proof}

\section*{Acknowledgments}

The author would like to thank to Thomas Parker for valuable comments and simplifications of the argument.
The author also thanks to the anonymous referees for their careful reading and crucial suggestions to the previous versions of this paper.


\bibliographystyle{abbrv}
\bibliography{bib}

\end{document}